\newtheorem{theorem}{Theorem} [section]
\newtheorem{prop}[theorem]{Proposition}
\newtheorem{lemma}[theorem]{Lemma}
\newtheorem{cor}[theorem]{Corollary}
\theoremstyle{definition}
\theoremstyle{remark}
\newtheorem{remark}{Remark}
\numberwithin{equation}{section}
\numberwithin{figure}{section}
\numberwithin{example}{section}
\newcommand\A{{\mathbb A}}
\newcommand\bP{{\mathbb P}}
\newcommand\C{{\mathbb C}}
\renewcommand\P{{\mathbb P}}
\newcommand\Q{{\mathbb Q}}
\newcommand\D{{\mathbb D}}
\renewcommand\phi{\varphi}
\renewcommand\O{\mathcal{O}}
\newcommand\cL{\mathcal{L}}
\newcommand\cLbar {\mathcal{\overline{L}}}
\newcommand\cM{\mathcal{M}}
\newcommand\Gal{\operatorname{Gal}}
\newcommand{\bff}{{\mathbf f}}
\renewcommand\mod{\operatorname{mod}}  
\newcommand\OK {\Omega_K}
\newcommand\lra{\longrightarrow}
\newcommand\Kbar {\overline{K}}
\newcommand\Qbar {\overline{\Q}}
\newcommand\hhat {\hat{h}}
\begin{document}

\title{The Dynamical Andr\'e-Oort Conjecture: Unicritical Polynomials}

\author{D. Ghioca}
\address{
Dragos Ghioca\\
Department of Mathematics\\
University of British Columbia\\
Vancouver, BC V6T 1Z2\\
Canada
}
\email{dghioca@math.ubc.ca}

\author{H. Krieger}
\address{
Holly Krieger\\
Department of Mathematics\\
Massachusetts Institute of Technology\\
77 Massachusetts Avenue\\
Cambridge, MA 02139\\
USA
}
\email{hkrieger@math.mit.edu}

\author{K. Nguyen}
\address{
Khoa Nguyen\\
Department of Mathematics\\
University of British Columbia\\
Vancouver, BC V6T 1Z2\\
Canada}
\email{dknguyen@math.ubc.ca}

\author{H. Ye}
\address{
Hexi Ye\\
Department of Mathematics\\
University of British Columbia\\
Vancouver, BC V6T 1Z2\\
Canada}
\email{yehexi@math.ubc.ca}

\begin{abstract}
We establish the equidistribution with respect to the bifurcation measure of post-critically finite maps in any one-dimensional algebraic family of unicritical polynomials. Using this equidistribution result, together with a combinatorial analysis of certain algebraic correspondences on the complement of the Mandelbrot set $\cM_2$ (or generalized Mandelbrot set $\cM_d$ for degree $d>2$), we classify all algebraic curves $C\subset \C^2$ with Zariski-dense subsets of points $(a,b)\in C$, such that both $z^d+a$ and $z^d+b$ are simultaneously postcritically finite for a fixed degree $d\geq 2$. Our result is analogous to the famous result of Andr\'e \cite{Andre} regarding plane curves which contain infinitely many points with both coordinates CM parameters, and is the first complete case of the dynamical Andr\'e-Oort phenomenon studied by Baker and DeMarco \cite{Baker-DeMarco}. 

\end{abstract}

\subjclass[2010]{Primary 37F50; Secondary 37F05}
\keywords{Mandelbrot set, unlikely intersections in dynamics}

\thanks{The research of H.K. was partially supported by an NSF grant, while the research of D.G., K.N. and H.Y. was partially supported by NSERC grants.}

\maketitle


\section{Introduction}

In the past 20 years there was a considerable interest in studying the principle of \emph{unlikely intersections} in arithmetic geometry (for a comprehensive discussion, see the book of Zannier \cite{Zannier-book}). Informally, this principle of unlikely intersections (of which special cases are both the Andr\'e-Oort and the Pink-Zilber conjectures) predicts that each time an intersection of an algebraic variety with a family of algebraic varieties is larger than expected, then this is explained by the presence of a rigid geometric constraint. Motivated by a version of the Pink-Zilber Conjecture for semiabelian schemes, Masser and Zannier (see \cite{M-Z-1, M-Z-2}) proved that in a non-constant elliptic family $E_t$ parametrized by $t\in \C$, for any two sections $\{P_t\}_t$ and $\{Q_t\}_t$, if there exist infinitely many $t\in \C$ such that both $P_t$ and $Q_t$ are torsion points on $E_t$, then the two sections are linearly dependent. 

Motivated by a question of Zannier, Baker and DeMarco \cite{Baker-DeMarco} proved a first result for the unlikely intersections principle in the context of algebraic dynamics. More precisely, Baker and DeMarco showed that for an integer $d\ge 2$, and for two complex numbers $a$ and $b$, if there exist infinitely many $t\in\C$ such that both $a$ and $b$ are preperiodic under the action of $z\mapsto z^d+t$, then $a^d=b^d$. Baker and DeMarco's result \cite{Baker-DeMarco} can be seen as an analogue of Masser and Zannier result \cite{M-Z-1, M-Z-2} (which can be reformulated for simultaneous preperiodic points in a family of Latt\'es maps) \emph{without} the presence of an algebraic group. The absence of an algebraic group in the background is an added difficulty for the problem, which is solved by Baker and DeMarco employing an argument which relies on a theorem regarding the equidistribution of points of small height for algebraic dynamical systems (see \cite{Baker-Rumely06, CLoir, favre-rivera06}). New results followed (see \cite{GHT-ANT, Matt-Laura-2}) extending the results of \cite{Baker-DeMarco} to  arbitrary $1$-parameter families of polynomials. 

In \cite{Matt-Laura-2}, Baker and DeMarco posed a very general question for families of dynamical systems, which is motivated by the classical Andr\'e-Oort. As a parallel to the classical Andr\'e-Oort Conjecture, Baker and DeMarco's question asks that if a subvariety $V$ of the moduli space of rational maps of given degree contains a Zariski dense set of \emph{special points}, then $V$ itself is \emph{special} (i.e., cut out by critical orbit relations; see \cite{Matt-Laura-2} for more details). The \emph{special points} of the moduli space in Baker-DeMarco's question are the ones corresponding to \emph{postcritically finite (PCF) maps} $f$, i.e. each critical point of $f$ is preperiodic. In this article we prove a general result for curves supporting this Dynamical Andr\'e-Oort Conjecture.

\begin{theorem}\label{main theorem}
Let $C$ be an irreducible algebraic plane curve defined over $\C$, and let $d\geq 2$ be an integer. There exist infinitely many points $(a,b)\in C$, such that both $z\mapsto z^d+a$ and $z\mapsto z^d+b$ are postcritically finite, if and only if  one of the following conditions holds:
\begin{itemize}
\item[(1)] there exists $t_0\in \C$ such that $z\mapsto z^d+t_0$ is PCF and $C$ is the curve $\{t_0\}\times \A^1$; 
\item[(2)] there exists $t_0\in \C$ such that $z\mapsto z^d+t_0$ is PCF and $C$ is the curve $\A^1\times \{t_0\}$; 
\item[(3)] there exists a $(d-1)$-st root of unity $\zeta$ such that $C$ is the zero locus of the equation $y-\zeta x=0$.
\end{itemize}
\end{theorem}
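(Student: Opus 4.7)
The ``if'' direction is a direct check: in cases $(1)$ and $(2)$, fix a PCF parameter $t_0$ (infinitely many exist in the unicritical family) and vary the other coordinate; in case $(3)$, the affine change $z\mapsto\zeta^{-1}z$ conjugates $z^d+a$ to $z^d+\zeta a$ whenever $\zeta^{d-1}=1$, so pairing each PCF value $a$ with $b=\zeta a$ produces infinitely many simultaneously PCF points on $\{y=\zeta x\}$.

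For the converse, I would first assume $C$ is neither horizontal nor vertical, so that the projections $\pi_1,\pi_2\colon C\to\A^1$ are dominant (hence finite). Write $G(c)=g_{f_c}(0)$ for the escape rate of the critical orbit in the family $f_c(z)=z^d+c$, so that $\mu_d=\tfrac{1}{2\pi}\Delta G$. The first step is to apply the equidistribution theorem of the paper through each projection in turn: any infinite sequence of simultaneously PCF points of $C$ equidistributes on $C$ to both $\pi_1^*\mu_d/\deg\pi_1$ and $\pi_2^*\mu_d/\deg\pi_2$, giving the equality of probability measures
\[
  \tfrac{1}{\deg\pi_1}\,\pi_1^*\mu_d \;=\; \tfrac{1}{\deg\pi_2}\,\pi_2^*\mu_d \qquad \text{on } C.
\]

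The next step is potential-theoretic. The measure identity makes an appropriate linear combination of $G\circ\pi_1$ and $G\circ\pi_2$ harmonic on the smooth locus of $C$; matching the logarithmic singularities at the punctures of the projective closure of $C$ shows that $G\circ\pi_1$ and $G\circ\pi_2$ coincide up to the degree scaling. Restricting to the open set $U := \pi_1^{-1}(\A^1\setminus\cM_d)\cap\pi_2^{-1}(\A^1\setminus\cM_d)$ and using $G(c)=\log|\phi_c(c)|$ (where $\phi_t$ is the B\"ottcher coordinate of $f_t$ normalized by $\phi_t(z)\sim z$ at infinity), the equality on moduli of $\phi_{\pi_1(c)}(\pi_1(c))$ and $\phi_{\pi_2(c)}(\pi_2(c))$ follows. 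Their holomorphic ratio is unimodular, hence locally constant, so on each connected component of $U$ one arrives at
\[
  \phi_{\pi_1(c)}(\pi_1(c)) \;=\; \lambda\,\phi_{\pi_2(c)}(\pi_2(c)) \qquad \text{for some } \lambda\in S^1.
\]

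The final and hardest step is the rigidity argument converting this analytic identity into the algebraic classification of case $(3)$. Set $\Phi(c)=\phi_c(c)$, the Douady--Hubbard-type uniformization $\A^1\setminus\cM_d\to\C\setminus\overline{\D}$, which is a biholomorphism satisfying $\Phi(c)\sim c$ at infinity and the equivariance $\Phi(\zeta c)=\zeta\Phi(c)$ for $\zeta\in\mu_{d-1}$. The identity $\Phi(a)=\lambda\Phi(b)$ expresses $b$ as a single-valued analytic function of $a$ on each component of $U$; since $C$ is algebraic and irreducible, this function is algebraic, and comparing its behavior at infinity (using $\Phi(c)\sim c$) forces it to be the affine polynomial $F(a)=\lambda^{-1}a+\beta$. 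The map $F$ must preserve $\cM_d$ as a set, and the combinatorial analysis of algebraic correspondences on the complement of $\cM_d$ announced in the abstract classifies all such affine self-symmetries of $\cM_d$ as precisely the multiplications by $\mu_{d-1}$, forcing $\lambda\in\mu_{d-1}$ and $\beta=0$. Thus $C=\{y=\zeta x\}$ with $\zeta=\lambda^{-1}$. The equidistribution and potential-theoretic steps sit in the now-standard complex/arithmetic-dynamics toolkit; the crux of the proof is this last step, where the fine combinatorial structure of $\partial\cM_d$ (external rays, hyperbolic components, Misiurewicz landings) must be used to exclude any ``exotic'' affine symmetry.
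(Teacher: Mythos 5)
Your high-level strategy is essentially the same as the paper's: equidistribution through both projections, equality of normalized bifurcation measures, proportionality of escape-rate functions via matching logarithmic singularities, the unimodular ratio of B\"ottcher-type uniformizations on an unbounded component, and finally a rigidity argument on $\C\setminus\cM_d$ to extract the curve $\{y=\zeta x\}$. The gaps are all concentrated in the last step, which you correctly identify as the crux but then do not actually carry out.

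First, a substantive omission: the two projections $\pi_1,\pi_2$ may have different degrees $d_1,d_2$, and the identity coming from $d_{\psi_2}G_{\psi_1}=d_{\psi_1}G_{\psi_2}$ is $\Phi(\pi_1(t))^{d_2}=e^{2\pi i\eta}\,\Phi(\pi_2(t))^{d_1}$, not $\Phi(a)=\lambda\Phi(b)$. Part of the work in the paper (end of the proof of Theorem~\ref{trivial relation}) is to deduce $d_2\mid d_1(d-1)$ from the wake/width formula; by dropping the exponents you also lose the fact that, for $d_1>1$, the alleged branch of $b$ as a function of $a$ is multi-valued.

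Second, the claim that ``comparing its behavior at infinity (using $\Phi(c)\sim c$) forces it to be the affine polynomial $F(a)=\lambda^{-1}a+\beta$'' is not a proof. Agreement with an algebraic branch on an open subset of $\C\setminus\cM_d$ together with the asymptotic $F(a)\sim\lambda^{-1}a$ at infinity does not exclude higher-degree algebraic branches, additional points at infinity on $\bar C$, or $\lambda$ being an irrational rotation. In the paper, \emph{affinity is the conclusion} of the combinatorial analysis, not an input: Proposition~\ref{etarational} shows the rotation number $\eta$ is rational by landing Misiurewicz or parabolic points on both sides (a step you omit entirely); then Proposition~\ref{width formula}, via preservation of periods of hyperbolic components (Proposition~\ref{prop same period}), pins down the induced angle map to $\theta\mapsto\frac{k}{d-1}\theta+\frac{j}{d-1}$ and then to $k=d-1$, whence $h(z)=\zeta z$ on an open set. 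Your framing --- first deduce $F$ is affine, then invoke ``the combinatorial analysis announced in the abstract'' to classify affine self-symmetries of $\cM_d$ --- both inverts the logical order and is circular, since that classification is exactly what must be proved. To close the gap you would need to (i) prove rationality of the twist, (ii) show the induced action on rational external angles is linear with rational slope and translation, and (iii) use preservation of rational landing pairs and the width formula to force the slope to be $1$ modulo $\frac{1}{d-1}\Z$ and the translation to be $\frac{j}{d-1}$.
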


In \cite{GKN:preprint}, Theorem~\ref{main theorem} was proven in the special case $C$ is the graph of a polynomial. The extension to arbitrary curves in Theorem~\ref{main theorem} requires overcoming several technical difficulties. We also note that Theorem~\ref{main theorem} can be viewed as a dynamical analogue of Andr\'e's theorem \cite{Andre} regarding plane curves containing infinitely many points with both coordinates CM points in the parameter space of elliptic curves. In the world of polynomial dynamics, the equivalent notion of a CM elliptic curve is a PCF polynomial. Indeed,  the parallel between the two can be viewed also at the level of arboreal Galois representation associated to a polynomial which is expected to have smaller image for PCF maps, 
analogous to the situation for the  Galois representation associated to an elliptic curve, which  has smaller image in the case of CM elliptic curves; for more details, see \cite{Jones, Pink-1, Pink-2, Pink-3}.

We observe that it is immediate to see that a curve of the form (1)~to~(3) as in the conclusion of Theorem~\ref{main theorem} contains infinitely many points with both coordinates PCF parameters; the difficulty in Theorem~\ref{main theorem} is proving that \emph{only} such curves have infinitely many such points. If $C$ does not project dominantly onto one of the axis of $\A^2$, it is immediate to see that $C$ must have the form (1)~or~(2) above. So, the content of Theorem~\ref{main theorem} is to show that when $C$ projects dominantly onto both axis of $\A^2$, and in addition $C$ contains infinitely many points with both coordinates PCF parameters, then $C$ must be of the form (3) as in the conclusion of our result. 

Theorem~\ref{main theorem} can be viewed as a generalization of the problem studied in \cite{Baker-DeMarco}, as follows. Given a plane curve $C$, we have two families of polynomials parametrized by the points $t\in C$: $\bff_{1,t}(z):=z^d+\pi_1(t)$ and $\bff_{2,t}(z):=z^d+\pi_2(t)$, where $\pi_1$ and $\pi_2$ are the two projections of $C$ onto the two coordinate axis of the affine plane. Then we study under what conditions there are infinitely many $t\in C$ such that $0$ is preperiodic under both $\bff_{1,t}$ and $\bff_{2,t}$. More generally, one could consider any two families of rational maps $\bff_{1,t}$ and $\bff_{2,t}$ parametrized by points $t$ on some curve $C$, take any two rational maps $c_1,c_2:C\lra \bP^1$, and ask under what conditions on the curve $C$, on the two families $\bff_1$ and $\bff_2$, and on the starting points $c_1$ and $c_2$, there exist infinitely many points $t\in C$ such that both $c_1(t)$ and $c_2(t)$ are preperiodic under the action of $\bff_{1,t}$, respectively of $\bff_{2,t}$.   However this is a very hard question, and there are only a handful of results with restricted conditions in the literature; see \cite{Baker-DeMarco, GHT-ANT, Matt-Laura-2, GHT:preprint}. For examples in \cite{Baker-DeMarco, GHT-ANT, Matt-Laura-2}, $\bff_1$ and $\bff_2$ are families of polynomials and $C=\A^1$. In  \cite{GHT:preprint}, the family of rational functions, parametrized by a projective curve $C$, must have exactly one degenerate point on $C$ and also the families $\bff_1$ and $\bff_2$ must satisfy additional technical conditions. In this article, we release all the restrictions on the curve $C$, which parametrizes a family of  unicritical polynomials. Also, we note that the result of \cite{GKN:preprint} relied on the results from \cite{GHT-ANT}, hence the restriction to curves $C$ which were graphs of polynomials because then the families of maps $\bff_{1,t}$ and $\bff_{2,t}$ were parametrized by the affine line.

One of the main ingredients of  our article (and also of all of the above articles) is the arithmetic equidistribution of small points on an algebraic variety (in the case of $\P^1$, see \cite{Baker-Rumely06, favre-rivera06}, in the general case of  curves, see \cite{CLoir}, while for arbitrary varieties, see \cite{Yuan}). Another main ingredient of this article is the geometric properties of the generalized Mandelbrot sets $\cM_d$ (recall that $\cM_d$ is the set of all $t\in\C$ where the orbit of $0$ under $z\mapsto z^d+t$ is bounded). More precisely, we use the combinatorial behaviour of the landing of the external rays, from which we get the precise equations for the curves $C$ in Theorem~\ref{main theorem}. Using Yuan's powerful theorem \cite{Yuan} we show that postcritically finite maps equidistribute on the parameter space with respect to the bifurcation measure; see Theorem \ref{pcf equidistribution}. Assuming there exist infinitely many points $(a,b)$ on the plane curve $C$ such that both $z^d+a$ and $z^d+b$ are PCF, then the potential (escape-rate) functions for the bifurcation measures (with respect to the families $z^d+\pi_1(t)$ and $z^d+\pi_2(t)$, where $\pi_1$ and $\pi_2$ are the two projections of $C$ on the coordinates of $\A^2$) are proportional to each other; see Theorem \ref{escape relation}.  Hence we get an algebraic correspondence on the $d$-th generalized Mandelbrot set $\cM_d$: for each $(a,b)\in C$, we have that $a\in \cM_d$ if and only if $b\in \cM_d$. Using the theory of landing external rays on the $d$-th generalized Mandelbrot set we prove that the only algebraic correspondences on $\cM_d$ are linear given by an equation as in the conclusion of Theorem~\ref{main theorem}. 

We are indebted to Laura DeMarco and Thomas Tucker for their careful reading of, and helpful comments on, an early version of this article.  We also thank Bjorn Poonen and Curt McMullen for helpful discussions during the writing of this paper.


\section{Preliminaries}

In this section, we introduce terminologies and results (e.g. Yuan's arithmetic equidistribution theorem \cite{Yuan})  as needed for the latter sections. Though Yuan's equidistribution works for varieties of all dimensions, we focus on the one dimensional case. 

\subsection{The height functions}\label{height subsection} Let $K$ be a number field and $\Kbar$ be the algebraic closure of $K$. The number field $K$ is naturally equipped with a set $\OK$ of pairwise inequivalent nontrivial absolute values, together with positive integers $N_v$ for each $v\in \OK$ such that
\begin{itemize}
\item for each $\alpha \in K^*$, we have $|\alpha|_v=1$ for all but finitely many places $v\in \OK$. 
\item every $\alpha \in K^*$ satisfies the {\em product formula}
\begin{equation}\label{product formula}
   \prod_{v\in \OK} |\alpha|_v^{N_v}=1
   \end{equation}
\end{itemize}
For each $v\in \OK$, let $K_v$ be the completion of $K$ at $v$, let $\Kbar_v$ be the algebraic closure of $K_v$ and let $\C_v$ denote the completion of $\Kbar_v$. We fix an embedding of $\Kbar$ into $\C_v$ for each $v\in \OK$; hence we have a fixed extension of $|\cdot |_v$ on $\Kbar$. When $v$ is archimedean, then $\C_v\cong \C$. For any $x\in \Kbar$, the  Weil height is 
\begin{equation}\label{naive height}
h(x)=\frac{1}{[K(x):K]} \sum_{y\in \Gal(\Kbar/K)\cdot x}~ \sum_{v\in \OK} \log^+|y|_v
\end{equation}
 where $\log^+ z=\log \max\{1, z\}$ for any real number $z$. 

 Let $f\in K[z]$ be any polynomial with degree $d\geq 2$. We use the notation $f^n$ for the composition of $f$ with itself $n$ times. As introduced by Call and Silverman \cite{Call:Silverman}, we have the following {\em canonical height} for every $x\in \Kbar$
 \begin{equation}\label{call-silverman height}
     \hhat_f(x)=\lim_{n\to \infty} \frac{h(f^n(x))}{d^n}
     \end{equation}
 where $h(x)$ is the Weil height from (\ref{naive height}). Call and Silverman \cite{Call:Silverman} showed that the above canonical height is well-defined, and moreover, $\hhat_f(x)\geq 0$ with equality  if and only if $x$ is preperiodic under the iteration of $f$. Hence, $f$ is postcritically finite if and only if all its critical points have canonical height zero.
 
\subsection{Adelic metrized line bundle and equidistribution} 
Let $\cL$ be a line bundle of a nonsingular projective curve $X$ over a number field $K$. As in Subsection~\ref{height subsection} , $K$ is naturally equipped with absolutes $|\cdot|_v$ for $v\in \OK$. A {\em metric} $\|\cdot\|_v$ on $\cL$ is a collection of norms, one for each $x\in X(K_v)$, on the fibres  $\cL(x)$ of the line bundle, with 
   $$\|\alpha s(x)\|_v=|\alpha|_v\|s(x)\|_v$$
for any section $s$ of $\cL$. An {\em adelic metrized line bundle} $\cLbar=\{\cL, \{\|\cdot\|_v\}_{v\in \OK}\}$ over $\cL$ is a collection of metrics on $\cL$, one for each place $v\in \OK$, satisfying certain continuity and coherence conditions; see \cite{Zhang:line, Zhang:metrics}. 

For example, we can define adelic metrized line bundles for $\P^1$ over the line bundle $\cL=\O_{\P^1}(1)$. Let $s=u_0X_0+u_1X_1$ be a global section of $\cL=\O_{\P^1}(1)$, where $u_0$ and $u_1$ are scalars. The metrics are defined for each $[x_0:x_1]\in \P^1(\Kbar)$ as 
   $$\|s\left([x_0: x_1]\right)\|_v:=\frac{|u_0x_0+u_1x_1|_v}{\max\{|x_0|_v, |x_1|_v\}}$$
for places $v\in \OK$. It can be checked without any difficulty that $\cLbar:=\{\cL, \{\|\cdot\|_v\}_{v\in \OK}\}$ defined this way is an adelic metrized line bundle over $\cL$. Moreover, we can work with pullback metrics by an endomorphism of $\P^1$. More precisely, let $F\left([x_0: x_1]\right)=\left(F_0(x_0, x_1): F_1(x_0, x_1)\right)$ be an endomorphism of $\P^1$ where $F_1$ and $F_2$ are coprime homogeneous polynomials of degree $d$. The metrics on  $s=u_0X_0+u_1X_1$ are defined as
   $$\|s\left([x_0:x_1]\right)\|^F_v:=\frac{|u_0x_0+u_1x_1|_v}{\max\{|F_0(x_0, x_1)|_v, |F_1(x_0, x_1)|_v\}^{1/d}}$$
Hence $\cLbar_F:=\{\cL, \{\|\cdot\|^F_v\}_{v\in \OK}\}$ is an adelic metrized line bundle over $\cL$. 

A sequence $\{\cL, \{\|\cdot\|_{v,n}\}_{v\in \OK}\}_{n\geq 1}$ of adelic metrized line bundles over $\cL$ is convergent to $\{\cL, \{\|\cdot\|_v\}_{v\in \OK}\}$, if for all $n$ and all but finitely many $v\in\OK$, $\|\cdot\|_{v,n}=\|\cdot\|_v$, and if $\{\log\frac{\|\cdot\|_{v,n}}{\|\cdot\|_v}\}_{n\geq 1}$ converges to $0$ uniformly on $X(K)$ for all $v\in \OK$. It is clear that the limit $\{\cL, \{\|\cdot\|_v\}_{v\in \OK}\}$ is an adelic metrized line bundle. 

All metrics we consider here are induced by models or uniform limits of metrics from models; see\cite{Zhang:metrics, Yuan}. An adelic metrized line bundle $\cLbar$ is {\em algebraic} if there is a model $\mathcal{X}$ of $X$ that induces the metrics on $\cL$. An algebraic adelic metrized line bundle $\cLbar$ is {\em semipositive} if $\cLbar$ has semipositive curvatures at archimedean places and non-negative degree on any complete vertical curve of $\mathcal{X}$.  An adelic metrized line bundle $\cLbar$ is semipositive if it is the uniform limit of a sequence of algebraic adelic semipositive metrics over $\cL$.

For a semipositive line bundle $\cLbar$, we can define a height for each subvariety $Y$ of $X$ (denoted $\hhat_\cLbar(Y)$); see \cite{Zhang:metrics} for more details. Let $X$ be a nonsingular projective curve. In the case of points on $X$, the height for $x\in X(\Kbar)$ is given by 
\begin{equation}\label{points height}
\hhat_{\cLbar}(x)=\frac{1}{|\Gal(\Kbar/K)\cdot x|}\sum_{y\in\Gal(\Kbar/K)\cdot x}~\sum_{v\in \OK}-N_v\log\|s(y)\|_v
\end{equation}
where $|\Gal(\Kbar/K)\cdot x|$ is the number of points in the Galois orbits of $x$, and $s$ is any meromorphic section of $\cL$ with support disjoint from $\Gal(\Kbar/K)\cdot x$. A sequence of points $x_n\in X(\Kbar)$ is {\em small}, if $\lim_{n\to \infty} \hhat_{\cLbar}(x_n)=\hhat_{\cLbar}(X)$. 

\begin{theorem}\cite[Theorem 3.1]{Yuan}\label{yuan equidistribution}
Suppose $X$ is a projective curve over a number field $K$, and $\cLbar$ is a metrized line bundle over $X$ such that $\cL$ is ample and the metric is semipositive. Let $\{x_n\}$ be a non-repeating sequence of points in $X(\Kbar)$ which is small. Then for any $v\in \OK$, the Galois orbits of the sequence $\{x_n\}$ are equidistributed in the analytic space $X^{an}_{\C_v}$ with respect to the probability measure $d\mu_v=c_1(\cLbar)_v/\deg_\cL(X)$. 
\end{theorem}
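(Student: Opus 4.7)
The strategy is the classical variational method of Szpiro--Ullmo--Zhang, upgraded via Yuan's arithmetic Siu-type inequality to handle arbitrary adelic continuous metrics rather than only semipositive ones. First I would fix a continuous real-valued function $f$ on $X^{an}_{\C_v}$ arising as a uniform limit of model functions (a standard density argument then recovers general continuous $f$). Form the perturbed adelic metrized line bundle $\cLbar(\epsilon f)$ by rescaling the $v$-adic metric through $e^{-\epsilon f}$ and leaving the metrics at all other places unchanged. The resulting bundle is still an adelic continuous metric, though in general no longer semipositive, which is precisely the point at which deeper arithmetic tools become necessary.

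Next I would compare two different expressions for heights relative to the perturbed bundle. Directly from (\ref{points height}), the point heights transform explicitly by
$$\hhat_{\cLbar(\epsilon f)}(x) \;=\; \hhat_{\cLbar}(x) \;+\; \frac{\epsilon N_v}{|\Gal(\Kbar/K)\cdot x|}\sum_{y\in\Gal(\Kbar/K)\cdot x} f(y).$$
On the other hand, via Chambert-Loir's arithmetic intersection formalism (in which $c_1(\cLbar)_v$ serves as the curvature measure at $v$), the height of $X$ itself is a polynomial in $\epsilon$ whose linear term is $\int_{X^{an}_{\C_v}} f \, c_1(\cLbar)_v$, so
$$\hhat_{\cLbar(\epsilon f)}(X) \;=\; \hhat_{\cLbar}(X) \;+\; \epsilon \cdot 2 \int_{X^{an}_{\C_v}} f\, c_1(\cLbar)_v \;+\; O(\epsilon^2).$$

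The decisive input is Zhang's successive-minima inequality $\hat{e}_1(\cLbar(\epsilon f)) \geq \hhat_{\cLbar(\epsilon f)}(X)/(2\deg_\cL X)$. For semipositive metrics this is Zhang's theorem; in the present, possibly non-semipositive, perturbed situation one invokes Yuan's arithmetic Siu inequality
$$\widehat{\vol}(\cLbar - \overline{\mathcal{E}}) \;\geq\; \cLbar^{2} - 2\,\cLbar\cdot\overline{\mathcal{E}}$$
(for nef $\cLbar$ and arbitrary hermitian $\overline{\mathcal{E}}$), which, by writing the perturbation as a difference of two ample adelic metrics and bounding arithmetic volumes via an arithmetic Hilbert--Samuel expansion, yields the essential-minimum bound in this more general setting.

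Combining these three ingredients with the hypothesis that $\{x_n\}$ is small (so $\hhat_{\cLbar}(x_n)\to \hhat_{\cLbar}(X)/(2\deg_\cL X)$) and the non-repeating condition (so that $x_n$ cannot remain stuck in a proper subvariety), one divides by $\epsilon > 0$ and passes to the limit to obtain
$$\liminf_{n\to\infty}\; \frac{1}{|\Gal(\Kbar/K)\cdot x_n|}\sum_{y\in\Gal(\Kbar/K)\cdot x_n} f(y) \;\geq\; \int_{X^{an}_{\C_v}} f\, \frac{c_1(\cLbar)_v}{\deg_\cL(X)},$$
and the reverse inequality follows by applying the argument to $-f$ (equivalently $\epsilon<0$), yielding weak convergence against arbitrary continuous test functions. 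The principal obstacle is the arithmetic Siu inequality itself: it requires a genuinely new estimate on the number of small sections of a big arithmetic line bundle, going beyond what the classical Gillet--Soul\'e arithmetic Hilbert--Samuel formula provides in the ample case.
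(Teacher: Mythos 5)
The paper does not prove this result; it is quoted directly from Yuan \cite{Yuan}, so there is no in-house argument against which to compare. Your sketch is nonetheless a faithful account of Yuan's own proof: the Szpiro--Ullmo--Zhang variational method (perturb the $v$-adic metric by $e^{-\epsilon f}$, compare the first-order change in point heights with the first-order change in the height of $X$, and invoke an essential-minimum lower bound), with the arithmetic bigness/Siu inequality supplying exactly what is needed to make the essential-minimum bound available for the perturbed, generally non-semipositive, adelic metric. Two bookkeeping remarks. First, the paper's convention is that $\hhat_{\cLbar}(X)$ already denotes the normalized height, so that ``small'' literally means $\hhat_{\cLbar}(x_n)\to\hhat_{\cLbar}(X)$ with no extra division by $2\deg_\cL X$; your displays instead treat $\hhat_{\cLbar}(X)$ as the raw arithmetic self-intersection $\cLbar^2$, which is internally consistent but clashes with (\ref{points height}) and the paper's definition of ``small.'' Second, the first-order term in the expansion of $\hhat_{\cLbar(\epsilon f)}(X)$ should carry the same weight $N_v$ that you correctly included in the point-height formula, since the $v$-adic contribution to the arithmetic intersection is weighted by $N_v$ in this paper's conventions; as written, dividing the two displayed expansions would leave a spurious $N_v$ in the limiting measure. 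Both are cosmetic and do not affect the validity of the argument, which is the correct one.
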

\begin{remark}
\label{definition equidistribution remark}
When $v$ is archimedean, $X_{\C_v}^{an}$ corresponds to $X(\C)$ and the curvature $c_1(\cLbar)_v$ of the metric $\|\cdot\|_v$ is given by $c_1(\cLbar)_v=\frac{\partial \overline{\partial}}{\pi i}\log \|\cdot\|_v$. For non-archimedean place $v$, $X_{\C_v}^{an}$ is the Berkovich space associated to $X(\C_v)$, and Chambert-Loir \cite{CLoir} constructed an analog of curvature on $X_{\C_v}^{an}$. The precise meaning of the equidistribution above is: 
   $$\lim_{n\to \infty} \frac{1}{|\Gal(\Kbar/K)\cdot x_n|}\sum_{y\in \Gal(\Kbar/K)\cdot x_n} \delta_{y}=\mu_v$$
where $\delta_{y}$ is point mass probability measure supported on $y\in X^{an}_{\C_v}$, and the limit is the weak limit for probability measures on compact space $X^{an}_{\C_v}$. 
\end{remark}


\section{Equidistribution of PCF points}
\label{equidistribution section}

In Sections~\ref{equidistribution section} and \ref{identical section}, we prove that for a (one dimensional and non-isotrivial) family of unicritical polynomials with degree $d\geq 2$, the set of postcritically finite polynomials equidistributes on the parameter space. The main tool we use in this section is the arithmetic equidistribution theorem introduced in the previous section; for the setup of the equidistribution theorem, we follow \cite{GHT:preprint}. We start by stating Theorem~\ref{pcf equidistribution} which is our main goal; in order to do this we need to set up the proper notation.

\subsection{Statement of the equidistribution theorem for PCF parameters}
\label{subsection PCF statement}

For the definition of algebraic families of unicritical polynomials, we follow \cite{DeMarco: heights}. Let $f: X'\times \C\to \C$ be a one dimensional {\em algebraic family} of unicritical polynomials of degree $d\geq 2$. That is, $X'$ is a Zariski dense, open subset of an irreducible, smooth curve  $X$ defined over $\C$, while $\psi:X'\lra \A^1$ is a morphism, and $f$ is a polynomial map of degree $d$ given by 
  $f_{\psi(t)}(z):=f(t,z)=z^d+\psi(t)$,  for each $t\in X'(\C)$. We say that $f$ is {\em isotrivial} if $\psi$ is a constant map. Since there is nothing to study for an isotrivial family of unicritical polynomials,  we focus on the non-isotrivial case. In addition, we assume $X$ and $X'$ are defined over a number field $K$. If $\psi$ is a morphism defined over $K$, then we call $f$ an algebraic family of unicritical polynomials over the number field $K$. We can view $X$ as a {\em parameter space} for an algebraic family of unicritical polynomials. The main goal for us is proving the following result.
 
\begin{theorem}\label{pcf equidistribution}
Let $f: X'\times \C\to \C$ be a non-isotrivial, one dimensional algebraic family of degree $d\geq 2$ unicritical polynomials over a number field $K$. The set of parameters $t\in X'(\Kbar)$, for which $f(t, z): \C \to \C$ is postcritically finite,  equidistributes on the parameter space $X(\C)$ (with respect to the normalized bifurcation measure). 
\end{theorem}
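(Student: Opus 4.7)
The plan is to realize Theorem~\ref{pcf equidistribution} as an instance of Yuan's arithmetic equidistribution theorem (Theorem~\ref{yuan equidistribution}) by constructing an adelic semipositive metrized line bundle $\bar{\cL}$ on a smooth projective completion of $X'$ whose induced height at a parameter $t$ is $\hhat_{f_t}(0)$. PCF parameters then have height zero, and Yuan's theorem applied to any non-repeating enumeration of them yields equidistribution toward the curvature measure of $\bar{\cL}$, which is to be identified with the normalized bifurcation measure. Let $\bar X$ be the smooth projective curve containing $X'$ as a Zariski-open subset; extend $\psi$ to a morphism $\bar X \to \P^1$ and set $\cL := \psi^*\O_{\P^1}(1)$. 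Non-isotriviality of $f$ gives $\deg\cL = \deg\psi \geq 1$, so $\cL$ is ample.

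The adelic metric on $\cL$ is defined via the escape-rate functions: for each $v\in\OK$ and $t\in X'(\C_v)$, put
\[
G_v(t) \;=\; \lim_{n\to\infty} d^{-n}\log^+|f_t^n(0)|_v,
\]
which exists by a standard Call--Silverman telescoping argument, equals $\hhat_{f_t,v}(0)$, and satisfies $G_v(t) - \log^+|\psi(t)|_v = O_v(1)$ on $X'(\C_v)$. Equip the canonical section $s$ of $\cL$, whose divisor is $\psi^{-1}(\infty)$, with $-\log\|s(t)\|_v := G_v(t)$. Subject to the verifications listed below, this extends continuously to $\bar X^{an}_{\C_v}$, is subharmonic in the appropriate sense at every place (Berkovich-subharmonic at non-archimedean places, usual subharmonic at archimedean places), and carries the correct logarithmic singularity at each pole of $\psi$. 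Coherence at almost all places reduces to the observation that on a suitable integral model of $f$ one has $G_v \equiv \log^+|\psi|_v$ identically. Hence $\bar{\cL} = \{\cL,\{\|\cdot\|_v\}_v\}$ is an adelic semipositive metrized line bundle with $\cL$ ample.

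By construction, $\hhat_{\bar{\cL}}(t) = \hhat_{f_t}(0)$ for all $t \in X'(\Kbar)$. Since $0$ is the unique critical point of $f_t$, the map $f_t$ is PCF exactly when $\hhat_{f_t}(0) = 0$, so PCF parameters have height zero. Because the standard family $z^d+c$ contains infinitely many PCF parameters (for instance, $c=0$ together with all Misiurewicz parameters) and $\psi$ has finite fibres, our family $f$ also has infinitely many PCF parameters. The essential minimum of $\hhat_{\bar{\cL}}$ therefore vanishes, and Zhang's inequality combined with semipositivity forces $\hhat_{\bar{\cL}}(\bar X) = 0$. Any enumeration of distinct PCF parameters is thus a small sequence, and Yuan's theorem yields equidistribution of their Galois orbits toward the probability measure $c_1(\bar{\cL})_v/\deg_\cL(\bar X)$. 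By the construction of the bifurcation measure as $dd^c G_v$ at each place (in the sense of DeMarco, Favre--Rivera-Letelier, and Chambert-Loir), this limit measure is the normalized bifurcation measure of the family, completing the proof.

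The main obstacle will be the analytic verification of the metric: continuity and semipositivity of $G_v$ across the boundary $\bar X \setminus X'$, including the asymptotic $G_v(t) - m\log|1/u|_v = O(1)$ near each pole $t_0$ of $\psi$ of order $m$ with local uniformizer $u$, and the interpretation and verification of semipositivity at non-archimedean places via the Berkovich/Chambert-Loir formalism, where $G_v$ arises as a uniform limit of the model-induced subharmonic functions $d^{-n}\log^+|f_t^n(0)|_v$. Once these analytic facts are in place, the remaining steps are routine applications of the arithmetic machinery recalled in Section~\ref{height subsection}.
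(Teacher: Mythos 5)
Your proposal reproduces the paper's strategy exactly: define an adelic semipositive metrized line bundle $\cLbar$ on the projective model $X$ whose fibre norms are the escape-rate functions $G_v$ (the paper's \eqref{metric definition}, \eqref{at poles}), verify coherence at finite places by showing the metric stabilizes on a model (Lemma~\ref{good reduction}) and uniform convergence at all places (Proposition~\ref{convergence prop}), identify $\hhat_{\cLbar}$ with $\hhat_{f_t}(0)$ up to the normalization $d/d_\psi$ (Proposition~\ref{height relations}), deduce $\hhat_{\cLbar}(X)=0$ from Zhang's inequality plus the infinitude of PCF parameters, and invoke Yuan's theorem, matching $c_1(\cLbar)_v$ with the normalized bifurcation measure via $dd^c G_\psi$ (equation~\eqref{measures relations}). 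The verifications you defer as ``the main obstacle'' are precisely what Sections~3--4 of the paper carry out, so the plan is correct and coincides with the paper's proof.
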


We postpone the proof of this theorem to Subsection~\ref{curvature and bif} (for a precise definition of the equidistribution, see Remark~\ref{definition equidistribution remark}). In Subsection~\ref{bif subsection} we define the (normalized) biffurcation measure and also show its connection with the measures corresponding to certain adelic metrized line bundles as appearing in the work of Yuan \cite{Yuan}. The key to our proof of Theorem~\ref{pcf equidistribution} is the equidistribution theorem of Yuan (see Theorem~\ref{yuan equidistribution} and its consequence to our setting stated in Theorem~\ref{general equidistriibution}). 

\subsection{Metrics on a line bundle} 
\label{metrics definition subsection}

As previously stated, a non-isotrivial, one dimensional algebraic family of unicritical polynomials over a number field  $K$ is uniquely determined by a morphism $\psi: X'\to \A^1$, where $X'$ is a Zariski dense open subset of an irreducible, nonsingular projective curve $X$ defined over $K$. Hence the morphism $\psi: X'\lra \A^1$ induces a unique morphism $\psi:X\lra \P^1$ (for the sake of simplifying the notation, we use the same notation for both morphisms).
  
Let $\cL$ be the line bundle on the projective curve $X$ which is the pullback of $\O_{\P^1}(1)$ by $\psi$, i.e. $\cL:=\psi^*\O_{\P^1}(1)$. Next we are going to introduce metrics on this line bundle. Let $S$ be the set of poles of $\psi$ on $X$, i.e. $S$ consists of all $x\in X$ such that $\psi(x)=[1:0]$ (the infinity point of $\bP^1$). Let $X_0, X_1$ be the canonical sections on $\P^1$, and $s:=\psi^*(u_0X_0+u_1X_1)$ be a section of the line bundle $L$ with $u_0$ and $u_1$ being the scalars. 
 For any point $t\in X(\C_v)\backslash S$, we define the metrics for each $n\geq 1$ and each place $v\in \OK$ as follows: 
\begin{equation}\label{metric definition}
    \|s(t)\|_{v, n}:=\frac{|u_0\psi(t)+u_1|_v^{1/d_\psi}}{\max\{1, |f_{\psi(t)}^n(0)|_v\}^{1/(d_{\psi}\cdot d^{n-1})}}
\end{equation}
where $d_\psi$ is the degree of the morphism $\psi: X\to \P^1$. Moreover, for each $t_0\in S\subset X(\C_v)$, we define 
\begin{equation}\label{at poles}
\|s(t_0)\|_{v, n}:=v\textup{-$\lim_{t\to t_0}$} \|s(t)\|_{v,n}=|u_0|_v^{1/d_{\psi}}.
\end{equation}
The last equality in the above formula is obvious once we notice that when $t$ is close to $t_0$, we have $|f^n_{\psi(t)}(0)|_v^{1/ d^{n-1}}\sim |\psi(t)|_v$.
\begin{lemma}\label{good reduction}
For any nonarchimedean place $v\in \OK$ and any integer $n\geq 1$, we have 
   $$\|\cdot\|_{v,n}=\|\cdot\|_{v,1}$$
 on the line bundle $\cL$. 
\end{lemma}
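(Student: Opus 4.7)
The plan is to unwind the definition of the metrics and reduce the claim to a statement about the escape rate of $0$ under iteration of $f_c(z)=z^d+c$ at nonarchimedean places. Writing $c:=\psi(t)$, note that $f_c(0)=c$, so
\[
\|s(t)\|_{v,1}=\frac{|u_0\psi(t)+u_1|_v^{1/d_\psi}}{\max\{1,|\psi(t)|_v\}^{1/d_\psi}},\qquad \|s(t)\|_{v,n}=\frac{|u_0\psi(t)+u_1|_v^{1/d_\psi}}{\max\{1,|f_c^n(0)|_v\}^{1/(d_\psi d^{n-1})}}.
\]
Since the numerators agree, it suffices to prove, for every nonarchimedean $v\in\OK$ and every $t\in X(\C_v)\setminus S$,
\begin{equation}\label{goal-metric}
\max\{1,|f_c^n(0)|_v\}=\max\{1,|c|_v\}^{d^{n-1}}.
\end{equation}
For $t\in S$, both $\|s(t)\|_{v,1}$ and $\|s(t)\|_{v,n}$ are defined as the common value $|u_0|_v^{1/d_\psi}$ by \eqref{at poles}, so the equality is automatic there.

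The identity \eqref{goal-metric} will follow from the ultrametric inequality by a two-case induction on $n$. If $|c|_v\le 1$, then by induction $|f_c^n(0)|_v=|f_c^{n-1}(0)^d+c|_v\le\max\{|f_c^{n-1}(0)|_v^d,|c|_v\}\le 1$, so both sides of \eqref{goal-metric} equal $1$. If instead $|c|_v>1$, I claim $|f_c^n(0)|_v=|c|_v^{d^{n-1}}$ for all $n\ge 1$. The base case $n=1$ is $f_c(0)=c$. For the inductive step, assuming $|f_c^{n-1}(0)|_v=|c|_v^{d^{n-2}}$, we have $|f_c^{n-1}(0)^d|_v=|c|_v^{d^{n-1}}>|c|_v$ (using $|c|_v>1$ and $d^{n-1}>1$ for $n\ge 2$); the strict inequality combined with the ultrametric inequality gives $|f_c^n(0)|_v=|f_c^{n-1}(0)^d+c|_v=|c|_v^{d^{n-1}}$, which also equals $\max\{1,|c|_v\}^{d^{n-1}}$ since $|c|_v>1$.

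There is no genuine obstacle here: the content is simply the well-known observation that a unicritical polynomial over a nonarchimedean field has good reduction (the Julia set and the filled-in Julia set coincide with the closed unit disk whenever $|c|_v\le 1$, while $0$ escapes at the maximal possible rate whenever $|c|_v>1$), which is precisely what makes the sequence of metrics stabilize from $n=1$ onwards. The only bookkeeping worth flagging is the need to treat $t\in S$ separately via the limiting definition \eqref{at poles}, and to verify that the identity extends continuously across $S$ (which it does, since both metrics are defined at those points as the same limit $|u_0|_v^{1/d_\psi}$).
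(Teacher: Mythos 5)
Your proof is correct and takes essentially the same approach as the paper: both reduce the claim to showing $\max\{1,|f_c^n(0)|_v\}=\max\{1,|c|_v^{d^{n-1}}\}$ for $t\notin S$ and prove it by a two-case induction on $n$ according to whether $|c|_v\le 1$ or $|c|_v>1$, using the ultrametric inequality. The only (harmless) addition is your explicit remark about $t\in S$, where equality is immediate from the limiting definition~\eqref{at poles}.
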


\begin{proof} It suffices to show that $\max\{1,|f_{\psi(t)}^n(0)|_v\}=\max\{ 1, |\psi(t)|_v^{d^{n-1}}\}$ for all $t\in X(\C_v)\backslash S$, where $S$ is the set of poles for $\psi$. We prove it by induction. Suppose $\max\{1,|f_{\psi(t)}^n(0)|_v\}=\max\{ 1, |\psi(t)|_v^{d^{n-1}}\}$. If $|\psi(t)|_v\leq 1$, then $|f_{\psi(t)}^n(0)|_v\leq 1$. Hence $|f^{n+1}_{\psi(t)}(0)|_v=|(f^{n}_{\psi(t)}(0))^d+\psi(t)|_v\leq \max\{|(f^{n}_{\psi(t)}(0))^d|_v, |\psi(t)|_v\}\leq 1$ as $v$ is nonarchimedean. Otherwise if $|\psi(t)|_v> 1$ and $|f_{\psi(t)}^n(0)|_v=|\psi(t)|_v^{d^{n-1}}\geq |\psi(t)|_v>1$, then $|f^{n+1}_{\psi(t)}(0)|_v=|(f^{n}_{\psi(t)}(0))^d+\psi(t)|_v=|(f^{n}_{\psi(t)}(0))^d|_v=|\psi(t)|_v^{d^n}$.
\end{proof}

We define the metric 
$$\|s(t)\|_v:=\lim_{n\to \infty} \|s(t)\|_{v,n}\text{ for each place }v$$
and we prove next that $\log\|\cdot \|_{v,n}$ converges uniformly to $\log\|\cdot \|_v$.

\begin{prop}\label{convergence prop}
For each place $v\in \OK$,  $\log\|\cdot\|_{v,n}$ converges uniformly on $X(\C_v)$ to $\log\|\cdot\|_v$. 
\end{prop}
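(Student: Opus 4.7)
The argument splits into two cases by the type of the place $v$.

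\textbf{Nonarchimedean $v$.} By Lemma~\ref{good reduction}, the sequence $\|\cdot\|_{v,n}$ is constant in $n$: $\|\cdot\|_{v,n}=\|\cdot\|_{v,1}$ for every $n\geq 1$. Hence $\|\cdot\|_v=\|\cdot\|_{v,1}$ and the uniform convergence of $\log\|\cdot\|_{v,n}$ is immediate.

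\textbf{Archimedean $v$.} Identify $\C_v\cong\C$. Fix the section $s=\psi^{*}(u_0X_0+u_1X_1)$; then by \eqref{metric definition},
\[
\log\|s(t)\|_{v,n} \;=\; \tfrac{1}{d_\psi}\log|u_0\psi(t)+u_1|_v \;-\; \tfrac{1}{d_\psi}\phi_n(t)\qquad\text{on }X(\C)\setminus S,
\]
where $\phi_n(t):=\frac{1}{d^{n-1}}\log^{+}|f_{\psi(t)}^{n}(0)|_v$. Since only $\phi_n$ depends on $n$, uniform convergence of $\log\|s(\cdot)\|_{v,n}$ on $X(\C)$ reduces to uniform Cauchy convergence of $\phi_n$ on $X(\C)\setminus S$, together with compatibility with the prescribed value $\tfrac{1}{d_\psi}\log|u_0|_v$ at points of $S$.

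\emph{Key telescoping identity.} On $X(\C)\setminus S$,
\[
\phi_{k+1}(t)-\phi_{k}(t) \;=\; \frac{1}{d^{k}}\log\!\left|1+\frac{\psi(t)}{f_{\psi(t)}^{k}(0)^{d}}\right|_v.
\]
I establish uniform Cauchy convergence in two regimes.

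\emph{(a) Compact regime $|\psi(t)|_v\leq R$.} On any compact subset of $\C$, the functions $c\mapsto \frac{1}{d^n}\log^{+}|f_c^n(0)|_v$ converge uniformly to the Green's function of the generalized Mandelbrot set $\cM_d$ (this is the classical Douady--Hubbard/Branner--Hubbard fact on the escape-rate function). Pulling back by $\psi$ gives uniform convergence of $\phi_n$ on $\{t\in X(\C):\ |\psi(t)|_v\leq R\}$.

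\emph{(b) Large regime $|\psi(t)|_v\geq 2$.} A straightforward induction on $k$ shows $|f_c^k(0)|_v\geq |c|_v$ for all $k\geq 1$ whenever $|c|_v\geq 2$; in particular $|\psi(t)/f_{\psi(t)}^k(0)^d|_v\leq 1/2$. Applying $|\log|1+x||\leq 2|x|$ to the telescoping identity yields
\[
|\phi_{k+1}(t)-\phi_{k}(t)| \;\leq\; \frac{2\,|\psi(t)|_v^{\,1-d}}{d^{k}}\qquad(k\geq 1,\ |\psi(t)|_v\geq 2),
\]
and summing a geometric series gives $|\phi_n(t)-\phi_m(t)|\leq C\,|\psi(t)|_v^{1-d}\,d^{-\min(n,m)}$ uniformly in this regime.

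\emph{Combining.} Given $\eps>0$, from (b) choose $R\geq 2$ with $C R^{1-d}<\eps d_\psi$, so that $|\log\|s\|_{v,n}-\log\|s\|_{v,m}|<\eps$ for all $n,m\geq 1$ on $\{|\psi(t)|_v\geq R\}$; from (a) choose $N$ with the same bound for $n,m\geq N$ on $\{|\psi(t)|_v\leq R\}$. This gives uniform Cauchy convergence of $\log\|s(\cdot)\|_{v,n}$ on $X(\C)\setminus S$.

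\emph{Behaviour at poles.} For $t$ approaching $t_0\in S$ with $u_0\neq 0$, we have $\log|u_0\psi(t)+u_1|_v=\log|u_0|_v+\log|\psi(t)|_v+o(1)$, while by (b) $\phi_n(t)=\log|\psi(t)|_v+O(|\psi(t)|_v^{\,1-d})$ uniformly in $n\geq 1$. The two logarithmic divergences cancel and $\log\|s(t)\|_{v,n}$ extends continuously to $t_0$ with value $\tfrac{1}{d_\psi}\log|u_0|_v$, consistent with \eqref{at poles}. The case $u_0=0$ is analogous: both $\|s(t)\|_{v,n}$ and $\|s(t)\|_v$ vanish at $t_0$ with matching orders, so the ratio $\|s(\cdot)\|_{v,n}/\|s(\cdot)\|_v$ extends continuously to $1$ and the uniform control of the difference extends across $S$.

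\emph{Main obstacle.} The delicate point is uniformity \emph{up to and through} the poles $S$. Compact-set convergence on $X(\C)\setminus S$ is not by itself enough, since $X(\C)\setminus S$ is not compact. The explicit telescoping bound in (b), made possible by the rapid escape of the critical orbit when $|c|_v$ is large, is precisely what ensures the convergence passes to the boundary $S$ with the correct continuous extension.
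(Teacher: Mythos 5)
Your proof is correct, but it takes a genuinely different route in two respects.

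First, the paper does not split by the type of place: it runs a single, self-contained telescoping argument that works uniformly for archimedean and nonarchimedean $v$, because only the triangle inequality $|a+b|_v \leq |a|_v + |b|_v$ is used (which holds in both cases). Fixing $R>3$ and writing $R_{v,n}(t) := \max\{1, |f_{\psi(t)}^n(0)|_v\}^{1/d^{n-1}}$ (this is $e^{\phi_n(t)}$ in your notation), the paper directly bounds the consecutive ratios by
\[
(2R)^{-1/d^{n-1}} \;\leq\; \frac{R_{v,n+1}(t)}{R_{v,n}(t)} \;\leq\; (2R)^{1/d^{n-1}}
\]
for $|\psi(t)|_v \leq R$, and by $\left(\tfrac12\right)^{1/d^n} \leq R_{v,n+1}/R_{v,n} \leq \left(\tfrac32\right)^{1/d^n}$ for $|\psi(t)|_v > R$, then takes logarithms and sums the geometric tails. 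You instead invoke Lemma~\ref{good reduction} for nonarchimedean $v$ (which is fine, and arguably cleaner), and for archimedean $v$ in the bounded regime you appeal to the classical uniform convergence of $c \mapsto d^{-n}\log^+|f_c^n(0)|$ on compact sets; the paper's version of the bounded regime is elementary and citation-free, which is preferable for a self-contained exposition. Your unbounded-regime argument is close in spirit to the paper's second case: you both prove $|f_c^k(0)|_v \geq |c|_v$ by induction for $|c|_v$ large and then telescope, though with different thresholds ($|\psi(t)|_v \geq 2$ vs.\ $|\psi(t)|_v > R > 3$) and different inequalities ($|\log|1+x||\leq 2|x|$ vs.\ a direct bound on $R_{v,n+1}^{d^n}-R_{v,n}^{d^n}$).

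Second, you spell out the behaviour near the poles $S$ more explicitly than the paper, in particular checking that the two logarithmic divergences cancel to recover the prescribed value \eqref{at poles} uniformly in $n$; the paper summarizes this in a single closing sentence. Your ``Main obstacle'' paragraph correctly identifies that plain compact-set convergence on $X(\C_v)\setminus S$ is insufficient because that set is noncompact, and that the quantitative estimate in the escape region is what pushes the uniformity through to $S$. This is indeed the crux, and both proofs resolve it the same way. All the estimates you use check out (the induction on $|f_c^k(0)|$, the bound $|\log|1+x||\leq 2|x|$ for $|x|\leq 1/2$, and the geometric-series summation), so the argument is sound.
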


\begin{proof} Fix a place $v\in \OK$ and a real number $R$ greater than $3$.  Let 
  $$R_{v,n}(t):=\max\{ 1, |f_{\psi(t)}^n(0)|_v\}^{1/d^{n-1}}.$$
To prove this proposition, it suffices to show that $\{\log R_{v,n}(t)\}_{n\geq 1}$, as a sequence of functions on $X(\C_v)$, converges uniformly.  

First we prove the uniform convergence assuming $|\psi(t)|_v\leq R$. Then from the definition of $R_{v,n}(t)$, we know that

\begin{equation*}
\begin{split}
R_{v, n+1}^{d^{n}}(t)& \leq (R_{v,n}^{d^{n-1}}(t))^d+R\\
 & \leq (R+1)\cdot R_{v,n}^{d^{n-1}}(t) \textup{, since $R_{v,n}(t)\geq 1$}\\
 &\leq  2R\cdot R_{v,n}^{d^{n}}(t)
\end{split}
\end{equation*}
Similarly, if $R_{v,n}^{d^{n}}(t)\geq 2R$, then $R_{v, n+1}^{d^{n}}(t)\geq (R_{v, n}^{d^{n-1}}(t))^d-R\geq R_{v, n}^{d^{n}}(t)/2\geq R_{v, n}^{d^{n}}(t)/2R$. On the other hand, if $R_{v,n}^{d^{n}}(t)< 2R$, then $R_{v, n+1}^{d^{n}}(t) \geq R_{v, n}^{d^{n}}(t)/2R$. So in all cases, 
\begin{equation}
\label{R 0}   
\frac{1}{(2R)^{1/d^{n-1}}}\leq \frac{R_{v,n+1}(t)}{R_{v,n}(t)}\leq (2R)^{1/d^{n-1}}
\end{equation}
   which yields the uniform convergence of $\{\log R_{v,n}(t)\}_n$ (by taking logarithms in \eqref{R 0} and the use a telescoping sum) for all $t\in X(\C_v)$ satisfying $|\psi(t)|_v\leq R$. 

Secondly, we assume $t\in X(\C_v)\backslash S$ such that $|\psi(t)|_v> R$. We prove by induction on $n$ that $R^{d^{n-1}}_{v,n}(t)\geq |\psi(t)|_v$. Indeed, the case $n=1$ is obvious, while in general (also noting that $R>3$ and $d\ge 2$) we have: 
   $$R^{d^{n}}_{v,n+1}(t)\geq (R^{d^{n-1}}_{v,n}(t))^d-|\psi(t)|_v\geq R^{d^{n}}_{v,n}(t)-\frac{R^{d^{n}}_{v,n}(t)}{2}\geq \frac{R^{d^{n}}_{v,n}(t)}{2}\geq R^{d^{n-1}}_{v,n}(t)\geq  |\psi(t)|_v.$$
Then it is easy to see that 
   $$|R_{v,n+1}^{d^{n}}(t)-(R^{d^{n-1}}_{v,n}(t))^d|\leq |\psi(t)|_v\leq \frac{R^{d^{n}}_{v,n}(t)}{2}$$
and so, 
   $$\left|\frac{R_{v,n+1}^{d^{n}}(t)}{R^{d^{n}}_{v,n}(t)}-1\right|\leq \frac{1}{2}$$
or equivalently, 
\begin{equation}
\label{R 1}   
\left(\frac{1}{2}\right)^{1/d^{n}}\leq \frac{R_{v,n+1}(t)}{R_{v,n}(t)}\leq \left(\frac{3}{2}\right)^{1/d^{n}}.
\end{equation}
  
Taking logarithms in \eqref{R 1} and using again a telescoping sum, we obtain the uniform convergence of $\{\log R_{v,n}(t)\}_n$ for all $t\in X(\C_v)\setminus S$ such that $|\psi(t)|_v>R$. Finally, using also the convergence at the poles (according to \eqref{at poles}), we conclude the proof of Proposition~\ref{convergence prop}. 
\end{proof} 

\subsection{Equidistribution of small points} 
We use the same construction as in \cite[Section~7]{GHT:preprint}. So, from Lemma \ref{good reduction} and Proposition \ref{convergence prop}, we know that 
\begin{equation}\label{definition of ALB}\cLbar:=(\cL, \{\|\cdot\|_v\}_{v\in \OK}),
\end{equation}
is an adelic metrized line bundle which is semipositive.  The height function $\hhat_\cLbar$ on $X(\Kbar)$ associated to $\cLbar$ is given by: 
\begin{equation}\label{height of line bundle}
\hhat_{\cLbar}(t):=\sum_{v\in \OK}\frac{N_v}{|\Gal(\Kbar/K)\cdot t|}\cdot \sum_{y\in\Gal(\Kbar/K)\cdot t}-\log \|s(y)\|_v, \textup{ for any $t\in X(\Kbar)$}
\end{equation}
where $s$ is any section of $\cL=\psi^*\O_{\P^1}(1)$ which does not vanish on the Galois orbits of $t$. The product formula guarantees that this height does not depend on the section $s$ in the above formula. 
    
The adelic metrized line bundle $\cLbar$ is uniquely determined by the non-constant morphism $\psi: X\to \P^1$ (defined over $K$). For convenience, we use a new notation for the height  $\hhat_\cLbar$ on $X$ associated to the morphism $\psi$:
\begin{equation}\label{psi height}
\hhat_{\psi}(t):=\hhat_\cLbar(t), \textup{ for $t\in X(\Kbar)$}.
\end{equation}

So, as a corollary of Theorem~\ref{yuan equidistribution} applied to the problem we study, we obtain the following equidistribution theorem for points of height tending to $0$. 
\begin{theorem}\label{general equidistriibution}
Let $X$ be a nonsingular projective curve over a number field $K$ and $\psi: X\to \P^1$ be a non-constant morphism defined over $K$. The adelic metrized line bundle $\cLbar$ in (\ref{definition of ALB}), corresponding to the ample line bundle $\cL=\psi^*\O_{\P^1}(1)$ is semipositive. Let $\{t_n\}_{n\geq 1}\subset X(\Kbar)$ be any non-repeating sequence of small points, i.e. $\lim_{n\to \infty}\hhat_\psi(t_n)=0$. Then for any place $v\in \OK$, the Galois orbits of this sequence are equidistributed in the analytic space $X^{an}_{\C_v}$ with respect to the probability measure $ d \mu_v=c_1(\cLbar)_v/\deg_{\cL}(X)$. 
\end{theorem}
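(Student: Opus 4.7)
The strategy is to deduce Theorem~\ref{general equidistriibution} as a direct consequence of Yuan's equidistribution theorem (Theorem~\ref{yuan equidistribution}) applied to the adelic metrized line bundle $\cLbar$ constructed in Subsection~\ref{metrics definition subsection}. For this, three hypotheses must be checked: (a) the underlying line bundle $\cL=\psi^*\O_{\P^1}(1)$ is ample; (b) the adelic metric on $\cL$ is semipositive; and (c) the height of $X$ itself satisfies $\hhat_\cLbar(X)=0$, so that Yuan's notion of a small sequence (the condition $\hhat_\cLbar(t_n)\to \hhat_\cLbar(X)$) reduces to our hypothesis $\hhat_\psi(t_n)\to 0$.

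Ampleness is immediate, since a non-constant morphism $\psi:X\to\P^1$ between smooth projective curves is finite and surjective, so $\psi^*\O_{\P^1}(1)$ is ample as the pullback of an ample bundle. For semipositivity, Proposition~\ref{convergence prop} realizes $\cLbar$ as a uniform limit of the finite-level adelic metrized line bundles $\cLbar_n = (\cL,\{\|\cdot\|_{v,n}\}_{v\in \OK})$, so since uniform limits of semipositive algebraic metrics are semipositive, it suffices to verify semipositivity of each $\cLbar_n$. At a non-archimedean place $v$, Lemma~\ref{good reduction} reduces the question to $\|\cdot\|_{v,1}$, which is the $\psi$-pullback of the standard model metric on $\O_{\P^1}(1)$ coming from $\P^1_{\O_K}$, and is therefore nef on an appropriate integral model of $X$. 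At an archimedean place $v$, one computes, away from the divisor of the chosen section $s$ and the poles of $\psi$,
$$-\log\|s(t)\|_{v,n}=-\tfrac{1}{d_\psi}\log|u_0\psi(t)+u_1|_v+\tfrac{1}{d_\psi d^{n-1}}\log^+|f^n_{\psi(t)}(0)|_v.$$
The first summand is harmonic in $t$, as the $-\log$ of the modulus of a non-vanishing holomorphic function, while the second is subharmonic since $\log|f^n_{\psi(t)}(0)|_v$ is the log-modulus of a holomorphic function of $t$ and $\log^+$ is the maximum of two subharmonic functions. Hence $c_1(\cLbar_n)_v\ge 0$ as a current on $X(\C_v)$; the continuous extension across the poles of $\psi$ guaranteed by~(\ref{at poles}) introduces no negative mass.

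For (c), one first checks that $\hhat_\psi(t)\ge 0$ for every $t$, with equality exactly when $f_{\psi(t)}$ is postcritically finite: indeed, the product formula applied to $u_0\psi(t)+u_1$ kills the first term of the above expansion after summing over places and Galois conjugates, and what remains is the Call--Silverman canonical height $\hhat_{f_{\psi(t)}}(0)$ of the critical point $0$ under $f_{\psi(t)}$, up to a positive scalar depending on $d_\psi$ and $d$. Because the family is non-isotrivial, for each $n\ge 1$ the morphism $t\mapsto f^n_{\psi(t)}(0)$ is non-constant, so its zero locus is a non-empty finite Galois-stable subset of $X(\Kbar)$; as $n$ varies, these produce infinitely many distinct algebraic PCF parameters on $X$. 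The essential minimum of $\hhat_\psi$ on $X$ is therefore $0$, and the classical Zhang inequality relating essential minimum and $\hhat_\cLbar(X)$ forces $\hhat_\cLbar(X)=0$.

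Granted (a)--(c), Theorem~\ref{yuan equidistribution} applies verbatim and delivers the stated equidistribution of the Galois orbits of any small non-repeating sequence with respect to $d\mu_v=c_1(\cLbar)_v/\deg_\cL(X)$, for every place $v\in\OK$. I expect the principal technical hurdle to be step (b): although subharmonicity of each $-\log\|\cdot\|_{v,n}$ is elementary in the generic region, one must also control the behavior at the divisor of $s$ and at the poles of $\psi$, and do so uniformly in $n$, in order for the uniform convergence provided by Proposition~\ref{convergence prop} to transmit non-negativity of the curvature current to the limit.
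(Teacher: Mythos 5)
Your proof is correct and follows essentially the same route as the paper's: verify that $\cL=\psi^*\O_{\P^1}(1)$ is ample, that the adelic metric constructed in Subsection~\ref{metrics definition subsection} is semipositive (via Lemma~\ref{good reduction} at non-archimedean places and subharmonicity of $\log^+|f^n_{\psi(t)}(0)|_v$ at archimedean ones, passing to the uniform limit by Proposition~\ref{convergence prop}), and that $\hhat_\cLbar(X)=0$ via Zhang's successive-minima inequality together with Proposition~\ref{height relations} and the infinitude of PCF parameters, then invoke Yuan's Theorem~\ref{yuan equidistribution}. The paper compresses these verifications by deferring to \cite[Section~7]{GHT:preprint} and Remark~\ref{zariski dense}; you have spelled out the same checks explicitly.
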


\begin{remark}
We note that $h_\cLbar(X)=0$ because $X$ contains an infinite set of points with height $0$ (see \cite[Theorem~(1.10)]{Zhang:metrics}, Proposition \ref{height relations} and Remark \ref{zariski dense}).
\end{remark}

We obtain next the relation between the two heights $\hhat_\psi$ and $\hhat_{f_{\psi(t)}}$; we recall that $\hhat_{f_{\psi(t)}}$ is the canonical height for points on the affine line under the action of the polynomial $$f_{\psi(t)}(z):=z^d+\psi(t).$$  Also, we recall that $S$ is the set of poles for $\psi$. 
\begin{prop}\label{height relations}
For each $t\in X(\Kbar)\setminus S$, we have $\hhat_\psi(t)=\frac{d}{d_\psi}\cdot \hhat_{f_{\psi(t)}}(0)$, while  $\hhat_\psi (t)=0$ for each $t\in S$. In particular, $\hhat_\psi(t)\geq 0$ on $X(\Kbar)$ with equality  if and only if $t$ is a pole of $\psi$ or $f_{\psi(t)}$ is postcritically finite. 
\end{prop}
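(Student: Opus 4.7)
The plan is to compute $\hhat_\psi(t)$ from its definition \eqref{height of line bundle} by plugging in a section of $\cL = \psi^*\O_{\P^1}(1)$ that is well behaved on the Galois orbit of $t$, and then recognizing the result in terms of either the Call--Silverman height $\hhat_{f_{\psi(t)}}(0)$ or the boundary value \eqref{at poles}. Since the limit metric $\|\cdot\|_v = \lim_n \|\cdot\|_{v,n}$ is reached uniformly by Proposition~\ref{convergence prop}, I may first evaluate $\|s(y)\|_{v,n}$ using \eqref{metric definition}, compute an approximate height $h_n(t)$, and then let $n\to\infty$.

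For $t \in X(\Kbar)\setminus S$, note that the pole set $S$ is Galois stable (since $\psi$ is defined over $K$), so the section $s := \psi^*(X_1)$ (corresponding to $u_0=0$, $u_1=1$) does not vanish on $G\cdot t$, where I write $G := \Gal(\Kbar/K)$. From \eqref{metric definition},
$$-\log\|s(y)\|_{v,n} \;=\; \frac{1}{d_\psi\,d^{\,n-1}}\log^+|f_{\psi(y)}^n(0)|_v.$$
The key bookkeeping step is to pass from Galois orbits on $X$ to Galois orbits on $\P^1$: the map $\psi : G\cdot t \to G\cdot \psi(t)$ is $G$-equivariant and surjective with every fibre of size $[K(t):K(\psi(t))] = |G\cdot t|/|G\cdot \psi(t)|$, so
$$\sum_{y\in G\cdot t}\log^+|f_{\psi(y)}^n(0)|_v \;=\; \frac{|G\cdot t|}{|G\cdot \psi(t)|}\sum_{c'\in G\cdot \psi(t)}\log^+|f_{c'}^n(0)|_v.$$
Substituting into \eqref{height of line bundle} with the metrics $\|\cdot\|_{v,n}$ and using the standard presentation $h(z) = |G\cdot z|^{-1}\sum_{w\in G\cdot z}\sum_{v}N_v\log^+|w|_v$ of the Weil height \eqref{naive height}, the approximation collapses to
$$h_n(t) \;=\; \frac{d}{d_\psi}\cdot \frac{h(f_{\psi(t)}^n(0))}{d^n}.$$
Letting $n\to\infty$ and invoking Proposition~\ref{convergence prop} together with the definition \eqref{call-silverman height} of the Call--Silverman height, I obtain $\hhat_\psi(t) = (d/d_\psi)\cdot \hhat_{f_{\psi(t)}}(0)$.

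For $t \in S$, I will switch to the section $s := \psi^*(X_0)$ (i.e.\ $u_0=1$, $u_1=0$), which is nonvanishing on $G\cdot t\subseteq S$ and satisfies $\|s(y)\|_{v,n}=1$ for every $n$ and every pole $y$, by the boundary computation \eqref{at poles}. Thus every term in \eqref{height of line bundle} vanishes and $\hhat_\psi(t)=0$. The final assertion is immediate: Call and Silverman showed that $\hhat_{f_c}(0)\geq 0$ with equality exactly when $0$ is preperiodic under $f_c$, and for the unicritical polynomial $f_{\psi(t)}(z)=z^d+\psi(t)$ the point $0$ is the unique critical point, so vanishing of $\hhat_{f_{\psi(t)}}(0)$ is equivalent to $f_{\psi(t)}$ being PCF. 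The only non-routine point is the Galois bookkeeping that yields the constant fibre size $|G\cdot t|/|G\cdot\psi(t)|$; once that is in place, the rest is a telescoping identification of the approximate heights $h_n(t)$ with $(d/d_\psi)\,h(f_{\psi(t)}^n(0))/d^n$.
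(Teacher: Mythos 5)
Your proof is correct and follows essentially the same route as the paper: express $-\log\|s(y)\|_{v,n}$ via \eqref{metric definition}, sum over the Galois orbit and places to recognize a scaled Weil height of $f_{\psi(t)}^n(0)$, then let $n\to\infty$ and identify the Call--Silverman height, with the pole case handled by the boundary formula \eqref{at poles}. Your choice of section $\psi^*(X_1)$ (so the numerator is identically $1$) is a slightly cleaner way to sidestep the product-formula cancellation the paper invokes, and the constant-fibre-size bookkeeping you make explicit is the same observation the paper uses implicitly when passing from the sum over $\Gal(\Kbar/K)\cdot t$ to $h(f_{\psi(t)}^n(0))$ -- just note that one more application of it (to the $K$-morphism $c\mapsto f_c^n(0)$) is needed to go from the average over $G\cdot\psi(t)$ to the average over $G\cdot f_{\psi(t)}^n(0)$ appearing in \eqref{naive height}.
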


\begin{proof} First, assume that $t\in X(\Kbar)$ is a pole of $\psi$, i.e. $t\in S$. As $\psi$ is defined over $K$, then the Galois orbit of $t$ is contained in  $S$. By the product formula \eqref{product formula} together with the definition of metrics at a pole \eqref{at poles} and the definition of the height \eqref{height of line bundle}, we see that $\hhat_\psi (t)=0$. 

Secondly, let $t\in X(\Kbar)\backslash S$; in this case, the points in the  Galois orbit of $t$ are not poles of $\psi$. Let $X_0, X_1$ be the two canonical sections of $\O_{\P^1}(1)$, and pick $u_0, u_1\in K$ such that the section $u_0X_0+u_1X_1$ of  $\O_{\P^1}(1)$ does not vanish on $[\psi(t):1]\in \bP^1(\Kbar)$. For each $y\in \Gal(\Kbar/K)\cdot t$, this section does not vanish on $\psi(y)$. Define $s:=\psi^*(u_0X_0+u_1X_1)$, noting that $s$ does not vanish on the Galois orbits of $t\in X(\Kbar)$. Writing $d_\psi:=\deg(\psi)$, we have 

\begin{equation*}
\begin{split}
\hhat_\psi (t)&= \sum_{v\in \OK} \sum_{y\in\Gal(\Kbar/K)\cdot t}\frac{-N_v\cdot \log \|s(y)\|_v}{|\Gal(\Kbar/K)\cdot t|}, \textup{ by (\ref{height of line bundle}) and (\ref{psi height})}\\
 &= \sum_{v\in \OK} \sum_{y\in\Gal(\Kbar/K)\cdot t}\lim_{n\to \infty}\frac{N_v\cdot \log \max\{1, |f_{\psi(y)}^n(0)|_v\}^{1/(d_\psi\cdot d^{n-1})}}{|\Gal(\Kbar/K)\cdot t|}, \textup{ by (\ref{metric definition}) and (\ref{product formula})}\\
 &=  \frac{1}{|\Gal(\Kbar/K)\cdot t|} \lim_{n\to \infty}\sum_{v\in \OK} \sum_{y\in\Gal(\Kbar/K)\cdot t}\frac{N_v\cdot \log^+ |f_{\psi(y)}^n(0)|_v}{d_\psi\cdot d^{n-1}}\\
 &=\frac{d}{d_\psi}\cdot \hhat_{f_{\psi(t)}}(0), \textup{ by \eqref{naive height} and \eqref{call-silverman height}.}
\end{split}
\end{equation*}
The second part of the proposition follows, since $\hhat_{f_{\psi(t)}}(0)\geq 0$ with equality if and only if the critical point $0$ is preperiodic under iteration of $f_{\psi(t)}$; see \cite{Call:Silverman}. 
\end{proof}

\begin{remark}\label{zariski dense}
It is well known that there are infinitely many $t\in \Qbar$ such that $f_t(z)=z^d+t$ is postcritically finite. Since the morphism $\psi: X\to \P^1$ is non-constant, the set of points with zero height (for the height function $\hhat_\psi$)  is Zariski dense on $X(\Kbar)$.
\end{remark}


\section{Bifurcation and potential functions}\label{identical section}

In this section, we study the bifurcation of algebraic families of unicritical polynomials, parametrized by quasi-projective curves. Let $X'$ be (as in the previous Section) a Zariski open dense subset of an irreducible, nonsingular, projective curve $X$  which is a parameter space for two families of unicritical polynomials. In Theorem~\ref{escape relation} we prove that if there are infinitely many points in $X'$ such that the corresponding two polynomials for these two families are simultaneously PCF, then these two families of polynomials  have the same normalized bifurcation measure on $X'(\C)$; this result is a consequence of Theorem~\ref{general equidistriibution} and the definition of the bifurcation measure (see Subsection~\ref{curvature and bif}). 

\subsection{Bifurcation} \label{bif subsection} For a holomorphic family $f(t,\cdot): \P^1\to \P^1$ of rational functions of degree $d\geq 2$ parametrized by a complex manifold, we have a stable region, a bifurcation locus (which is the complement of the stable region) and a bifurcation measure (or (1,1)-current) on the parameter space; see \cite{D:current, D:lyap, Dujardin:Favre:critical, McMullen:CDR, Mane:Sad:Sullivan}.  One of the main goals in complex dynamics is to study the stability of holomorphic families (or moduli spaces) of rational functions. In this article, we restrict our study to algebraic families of unicritical polynomials, parametrized by quasi-projective curves. 

We work with the notation as in Subsection~\ref{subsection PCF statement}. So, $X$ is a smooth, irreducible curve, $X'$ is a Zariski dense open subset of $X$, and   
 $f:X'\times \C\to \C$ is an algebraic family of unicritical polynomials of  degree $d\geq 2$, i.e. $f_{\psi(t)}(z)=z^d+\psi(t)$ where $\psi: X'\lra \A^1$ is a morphism. A point $t_0\in X'$ is {\em stable} if the Julia sets $J_{f_{\psi(t)}}$ are moving holomorphically  in a neighbourhood of $t_0$, or equivalently,   $\{f^n_{\psi(t)}(0)\}_{n\geq 1}$ 
is a normal family of functions on some neighbourhood of $t_0$. The {\em bifurcation locus} on $X'(\C)$ is the set of parameters where $f_{\psi(t)}$ fails to be stable. By definition, the stable region is always an open subset of $X'(\C)$. 

We define the {\em escape-rate function} for $\psi$ as 
   $$G_{\psi}(t):=\lim_{n\to \infty}\frac{1}{d^n}\log^+|f_{\psi(t)}^n(0)|,$$
which is a subharmonic function on $X'(\C)$. It is convenient to extend the function $G_\psi$ on $X(\C)$ by defining it $$G_\psi(t)=0\text{ for each }t\in (X\setminus X')(\C).$$ The differential of the {\em bifurcation measure} is defined as 
\begin{equation}\label{definition of bifurcation measure}
d \mu_{\psi}:=dd^c G_{\psi}(t)
\end{equation}
with $dd^c=\frac{i}{\pi }{\partial \overline{\partial}}$ being the Laplacian operator. For the sake of simplifying the notation, when $\psi(t)=t$ is the identity map, we use $\mu$ and $G(t)$ instead of $\mu_\psi$ and $G_\psi(t)$. The support of the bifurcation measure coincides with the bifurcation locus on $X'(\C)$, and the bifurcation locus is empty if and only if $\psi$ is a constant  (i.e. $f_{\psi(t)}$ is isotrivial). From the definition of the the escape-rate function, we see that 
   $$G_\psi(t)=G(\psi(t)), $$
i.e. $G_\psi=\psi^* G$ is the pullback of the escape-rate function on the complex plane by $\psi$. Hence the bifurcation measure (resp. bifurcation locus) is the pullback of the bifurcation measure (resp. bifurcation locus) on the complex plane
   $$\mu_\psi=\psi^*\mu,$$
i.e. $\mu_\psi(A)=\mu(\psi(A))$ for $A\subset X'(\C)$ with $\psi$ being injective on $A$. 
\subsection{The generalized Mandelbrot sets} Here we deal with the simplest case: $\psi(t)=t$ (i.e. $f_{\psi(t)}(z)=f_t(z)=z^d+t$) and $X'$ itself is the affine (complex) line. The degree $d$ generalized Mandelbrot set $\cM_d$ is the set of parameters where the critical point $0$ is bounded under the iterates of $f_t$
   $$\cM_d:=\{t\in \C: ~ |f^n_t(0)| \not \to \infty \textup{ as $n\to \infty$} \}$$
When $d=2$, $\cM_2$ is the classical Mandelbrot set. See Figure \ref{external rays} for  the pictures of $\cM_2$ and $\cM_3$. We recall some basic properties of the generalized Mandelbrot sets. Every generalized Mandelbrot set is bounded and simply connected, and there is a unique biholomorphic map $\Phi$ (depending on $d$) from $\C\backslash \cM_d$ to the complement of the closed unit disk $\C\backslash \overline{\D}$ 
\begin{equation}\label{change coordinate}
\Phi:  \C\backslash \cM_d \tilde{\longrightarrow} \C\backslash \overline{\D}  
\end{equation}
with $\Phi(t)=t+O(1), \textup{ for $|t|>>0$}$. The Green's function $G_{\cM_d}$ for the compact set ${\cM_d}$ on $\C\backslash\cM_d$ is given by 
\begin{equation}\label{Phi}
G_{\cM_d}(t)=\log |\Phi(t)|
\end{equation}
and it is known that $G_{\cM_d}(t)=d\cdot G(t)$ (for example, see \cite{Baker-DeMarco}). Moreover, the escape-rate function satisfies the inequality $G(t)\geq 0$ with equality if and only if $t\in \cM_d$. The bifurcation locus for $f_t$ is the boundary $\partial \cM_d$ of $\cM_d$, and the bifurcation measure is proportional to the harmonic measure for $\cM_d$. 
\subsection{Two algebraic families of unicritical polynomials}\label{curvature and bif} Let $X$ be a nonsingular projective curve defined over a number field $K$, let $\psi:X\lra \P^1$ be a non-constant morphism, and let $S\subset X$ be its set of poles. We proceed as in subsection~\ref{metrics definition subsection} and define the adelic metrized line bundle $\cLbar$ endowed with metrics $\|\cdot \|_v$ for each $v\in\Omega_K$. We recall that when $v$ is archimedean, $\C_v\cong \C$ and $X^{an}_{\C_v}\cong X(\C)$.  The curvature $c_1(\cLbar)_v$ of $\|\cdot\|_v$  is given by $c_1(\cLbar)_v=-dd^c\log \|\cdot\|_v$. 
 
For the rest of this subsection, we fix an archimedean place $v$ and identify $\C_v$ with $\C$. For $t_0\in X(\C)\backslash S$, we let $s$ be a section on $\bP^1$ defined over $K$ which does not vanish at $\psi(t_0)$. Hence for $t\in X(\C)$ in a neighbourhood of $t_0$, using \eqref{metric definition},  we have 
   $$c_1(\cLbar)_v(t)=-dd^c\log \|\psi^*(s)(t_0)\|_v=dd^c \lim_{n\to \infty}\frac{\log^+|f_{\psi(t)}^n(0)|_v}{d_\psi \cdot d^{n-1}}=\frac{d}{d_\psi}\cdot dd^cG_\psi(t).$$
For the bifurcation measure $\mu_\psi$ on $X(\C)\backslash S$, we have 
\begin{equation}\label{measures relations}
\mu_\psi=\frac{d_\psi}{d}\cdot \mu_v
\end{equation}
where $d_\psi$ is the degree of $\psi$. In particular, we consider $\mu_v$ be the \emph{normalized bifurcation measure} with respect to which we get the equidistribution statement from Theorem~\ref{pcf equidistribution}. 

\begin{proof}[Proof of Theorem~\ref{pcf equidistribution}.]
Let $\{t_n\}\subset X'(\C)$ be a sequence of PCF parameters for the algebraic  family $f:X'\times \C\lra \C$ of unicritical polynomials of degree $d$. First of all, we note that each $\psi(t_n)\in\Qbar$ since $z^d+\psi(t_n)$ is a PCF map; since $\psi$ is defined over $\Qbar$, then also $t_n\in\Qbar$.  Then by Proposition~\ref{height relations}, $$\hhat_\psi(t_n)=0=\hhat_{f_{\psi(t_n)}}(0).$$ Using Theorem~\ref{general equidistriibution}, we conclude that the points $\{t_n\}$ equidistribute with respect to $\mu_v$, as desired.   
\end{proof}

Now, we consider two non-constant morphisms $\psi_i: X\to \P^1$ for $i=1, 2$, with sets of poles $S_1$ and $ S_2$, respectively. They determine two algebraic families of unicritical polynomials $f_{\psi_1(t)}$ and $f_{\psi_2(t)}$ of  degree $d\geq 2$. 
\begin{theorem}\label{escape relation}
Suppose there are infinitely many $t\in X\backslash (S_1\cup S_2)$, such that $f_{\psi_1(t)}$ and $f_{\psi_2(t)}$ are simultaneously postcritically finite. Then $d_{\psi_2}\cdot  \mu_{\psi_1}=d_{\psi_1}\cdot \mu_{\psi_2}$ on $X\backslash (S_1\cup S_2)$. Furthermore, on $X(\C)$
 \begin{equation}\label{G1 and G2}
 d_{\psi_2}\cdot  G_{\psi_1}(t)=d_{\psi_1}\cdot  G_{\psi_2}(t).
 \end{equation}
\end{theorem}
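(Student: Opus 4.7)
The plan is to invoke Yuan's arithmetic equidistribution theorem (Theorem~\ref{general equidistriibution}) for the adelic metrized line bundles $\overline{\cL_1}$ and $\overline{\cL_2}$ at an archimedean place, using the hypothesized infinite sequence of simultaneous PCF parameters as a small sequence for both. This will give equality of the two probability measures appearing in the equidistribution, which via the translation formula~\eqref{measures relations} yields the first claim; the escape-rate identity will then follow by potential-theoretic arguments on the Riemann surface $X(\C)$.

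First I would set up the small sequence. Let $(t_n)_{n\geq 1}\subset X(\C)\setminus(S_1\cup S_2)$ be an infinite sequence of distinct points at which $f_{\psi_1(t_n)}$ and $f_{\psi_2(t_n)}$ are both PCF. Since being PCF is an algebraic condition on the parameter and each $\psi_i$ is defined over $K$, the $t_n$ lie in $X(\Kbar)$; Proposition~\ref{height relations} then gives $\hhat_{\psi_i}(t_n)=0$ for $i=1,2$ and all $n$, making $(t_n)$ a non-repeating small sequence for both $\overline{\cL_1}$ and $\overline{\cL_2}$. Fixing any archimedean $v_0\in\OK$, Theorem~\ref{general equidistriibution} applied to each bundle shows the Galois orbits of $(t_n)$ equidistribute on $X^{\rm an}_{\C_{v_0}}=X(\C)$ with respect to both $\mu_{v_0,\psi_1}$ and $\mu_{v_0,\psi_2}$. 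Uniqueness of the weak limit of probability measures forces $\mu_{v_0,\psi_1}=\mu_{v_0,\psi_2}$, and the relation $\mu_{\psi_i}=(d_{\psi_i}/d)\mu_{v_0,\psi_i}$ from~\eqref{measures relations} immediately gives $d_{\psi_2}\mu_{\psi_1}=d_{\psi_1}\mu_{\psi_2}$ on $X(\C)\setminus(S_1\cup S_2)$, which is the first claim.

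For the escape-rate identity, set $h:=d_{\psi_2}G_{\psi_1}-d_{\psi_1}G_{\psi_2}$; by the measure equality just proved, $dd^c h=0$ on $X(\C)\setminus(S_1\cup S_2)$, so $h$ is harmonic there. Since both $G_{\psi_i}$ are non-negative and vanish identically on $\psi_i^{-1}(\cM_d)$, and since the measure equality forces the bifurcation loci $\psi_i^{-1}(\partial\cM_d)$ to coincide, $h$ vanishes on the common bifurcation locus. On each connected component of the common stable region whose closure stays disjoint from $S_1\cup S_2$, the boundary lies entirely in this common bifurcation locus and the maximum principle yields $h\equiv 0$. One then extends the identity across the pole-adjacent components and finally to all of $X(\C)$ using the convention $G_\psi\equiv 0$ on $(X\setminus X')(\C)$.

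The main obstacle is this last propagation step: controlling $h$ on those connected components of $X(\C)\setminus(S_1\cup S_2\cup\psi_1^{-1}(\partial\cM_d))$ whose closures contain points of $S_1\cup S_2$, where $G_{\psi_i}$ develops a logarithmic singularity of order equal to the pole order of $\psi_i$ at that point. For $h$ to vanish identically the polar divisors must satisfy $d_{\psi_2}\psi_1^*[\infty]=d_{\psi_1}\psi_2^*[\infty]$, i.e.\ $S_1=S_2$ with pole multiplicities in the ratio $d_{\psi_1}:d_{\psi_2}$. I expect this rigidity to be extracted by a careful local analysis of $dd^c h$ as a distribution on the compact $X$: the atomic part at each $t_0\in S_1\triangle S_2$, or at common poles with mismatched orders, would force $h$ to carry unavoidable logarithmic singularities that are inconsistent with the non-negativity of both $G_{\psi_i}$ on their natural domains and with the harmonicity of $h$ on punctured neighborhoods of the offending points---ultimately ruling out such mismatches and yielding the clean cancellation required for the stated identity.
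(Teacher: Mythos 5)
Your first two paragraphs match the paper's proof: the infinite PCF sequence is a small sequence for both bundles, uniqueness of weak limits in Yuan's theorem gives $\mu_{v_0,\psi_1}=\mu_{v_0,\psi_2}$, and the normalization~\eqref{measures relations} yields $d_{\psi_2}\mu_{\psi_1}=d_{\psi_1}\mu_{\psi_2}$. You also correctly set $h=d_{\psi_2}G_{\psi_1}-d_{\psi_1}G_{\psi_2}$, observe $h$ is harmonic on $X(\C)\setminus(S_1\cup S_2)$, and show $h\equiv 0$ on the stable components whose closures avoid the poles.

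The gap is in your last paragraph, and it is a gap you created for yourself: once you know $h$ is harmonic on the \emph{connected} Riemann surface $X(\C)\setminus(S_1\cup S_2)$ and $h\equiv 0$ on some nonempty open subset, the identity principle (harmonic functions are real analytic) immediately forces $h\equiv 0$ on the whole domain, pole-adjacent components included. There is no need to apply the maximum principle component-by-component, and there is certainly no need for the delicate distributional analysis of $dd^c h$ across $S_1\cup S_2$ that you sketch but do not carry out; the conclusions you hope to extract from that analysis (such as $S_1=S_2$ and proportional pole multiplicities, cf.\ Remark~\ref{phiequality}) are \emph{consequences} of~\eqref{G1 and G2}, not prerequisites. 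Your argument for producing the initial open set where $h=0$ is also needlessly roundabout: rather than identifying components ``whose closure stays disjoint from $S_1\cup S_2$'' and invoking the maximum principle, the paper simply notes that $\C\setminus\partial\cM_d$ has infinitely many components while each of $\psi_1^{-1}(\C\setminus\cM_d)$, $\psi_2^{-1}(\C\setminus\cM_d)$ has only finitely many, so some stable component on $X$ maps into the interior of $\cM_d$ under \emph{both} $\psi_1$ and $\psi_2$; there $G_{\psi_1}=G_{\psi_2}=0$ trivially. Replace your fourth paragraph with the one-line appeal to the identity theorem and the proof is complete.
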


\begin{proof}  The relations of the two bifurcation measures is clear from Theorem \ref{general equidistriibution} and (\ref{measures relations}). And then these two families have the same stable region on $X(\C)\backslash (S_1\cup S_2)$.  Let 
  $$H(t):=d_{\psi_2}\cdot  G_{\psi_1}(t)-d_{\psi_1}\cdot  G_{\psi_2}(t)$$
  be the difference of the two continuous subharmonic functions on $X(\C)\backslash (S_1\cup S_2)$. Since $d_{\psi_2}\cdot  \mu_{\psi_1}=d_{\psi_1}\cdot \mu_{\psi_2}$, and also using (\ref{definition of bifurcation measure}), $H(t)$ is harmonic on $X(\C)\backslash (S_1\cup S_2)$. The pullback (by $\psi_1$ or $\psi_2$) of a connected component of the stable region $\C\backslash \partial \cM_d$, consists of finitely many (up to the degree $d_{\psi_1}$ or $d_{\psi_2}$) connected components of the stable region on $X$.  As $\C\backslash \partial \cM_d$ consists of infinitely many connected components (see Figure \ref{external rays}), so is $\psi_2^{-1}(\C\backslash \partial \cM_d)$. Then we can pick one connected component of the stable region on $X(\C)\backslash (S_1\cup S_2)$, such that its images under both $\psi_1$ and $\psi_2$ are stable subsets within the generalized Mandelbrot set $\cM_d$. Hence for any $t$ in this component, $G_{\psi_1}(t)=G(\psi_1(t))=0=G(\psi_2(t))=G_{\psi_2}(t)$, which yields that $H(t)=0$. So the harmonic function $H(t)$ on  $X(\C)\backslash (S_1\cup S_2)$, which is identically zero on some open subset of $X(\C)\backslash (S_1\cup S_2)$, must be zero everywhere, and so \eqref{G1 and G2} follows. 
\end{proof}
   
\begin{remark} \label{phiequality} The set $S$ of poles for $\psi$ is the set of points $t_0\in X$ such that $\lim_{t \to t_0}G_\psi(t)=\infty$. With the same assumptions as in Theorem \ref{escape relation}, one has $S_1=S_2$ for the sets of poles of $\psi_1$ and $ \psi_2$. And moreover, by (\ref{Phi}) (\ref{G1 and G2}), for any $t\in X(\C)$ with $\psi_1(t)\in \C\backslash \cM_d$ (hence $\psi_2(t)\in \C\backslash \cM_d$ by proportionality of $G_{\psi_1}$ and $G_{\psi_2}$), we have 
   $$|\Phi(\psi_1(t))|^{d_{\psi_2}}=|\Phi(\psi_2(t))|^{d_{\psi_1}}.$$
\end{remark}


\section{Proof of the main theorem}

Suppose now that $X$ is an irreducible, nonsingular projective curve satisfying the hypothesis of Theorem \ref{escape relation}.  By Remark \ref{phiequality}, we conclude that for all $ t \in X(\C)$, 
$$\psi_1(t) \in \C \setminus \cM_d \Leftrightarrow \psi_2(t) \in \C \setminus \cM_d,$$ 
and further that the uniformizing map $\Phi: \C \setminus \cM_d \rightarrow \C \setminus \bar{\mathbb{D}}$ satisfies 
$$|\Phi(\psi_1(t))|^{d_{\psi_2}}=|\Phi(\psi_2(t))|^{d_{\psi_1}}.$$

Write $d_1 = d_{\psi_1}, d_2 = d_{\psi_2}$.  Let $X_0$ be a connected, unbounded component of the stable region in $X$; i.e., $X_0$ is a component of the preimage of $\mathbb{C} \setminus \cM_d$ under $\psi_2$. The quotient
$$\Phi(\psi_1(t))^{d_{2}}/ \Phi(\psi_2(t))^{d_{1}}$$
provides a holomorphic map $X_0 \rightarrow S^1$ (where $S^1$ is the complex unit circle); by the Open Mapping Theorem, this map is constant, so there exists $\eta \in \mathbb{R}$ such that for all $t \in X_0,$
\begin{equation} \label{phirelation}
\Phi(\psi_1(t))^{d_{2}} = e^{2 \pi i \eta} \cdot \Phi(\psi_2(t))^{d_{1}}.
\end{equation}

Following \cite{Douady-Hubbard}, the standard tool for studying the behavior of the degree $d$ Mandelbrot set $\cM_d$ is given by the {\em external rays} of the map $\Phi$.  We define the external ray for an angle $\theta \in \mathbb{R} / \mathbb{Z}$ to be 
$$\mathcal{R}(\theta) := \Phi^{-1} (\{ re^{2 \pi i \theta} : r >1 \}).$$
We recall some standard facts about external rays; see Chapters 8 and 13 of \cite{Douady-Hubbard}, and \cite{EMS}.  An external ray $\mathcal{R}(\theta)$ is said to be {\em rational} if $\theta$ is rational.  A point $c \in \cM$ is {\em Misiurewicz} if the critical point $0$ of  $z^d+c$ is strictly preperiodic, and clearly every PCF point on the boundary of $\cM_d$ is a Misiurewicz point. 

\begin{prop} \cite{Douady-Hubbard} All rational rays {\em land}; that is, there exists a unique point $c_{\theta} \in \partial \cM_d$ such that $\lim_{r \rightarrow 1} \Phi^{-1}(re^{2 \pi i \theta}) = c_{\theta}$.  Misiurewicz points are contained in the boundary of $\cM_d$, and every Misiurewicz point is the landing point of at least one rational ray.
\end{prop}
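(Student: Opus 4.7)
The plan is to reduce the landing question for parameter rays to the (better understood) landing question for dynamical rays in the Julia set of $f_c(z) = z^d+c$, via the classical identification of $\Phi$ with the B\"ottcher coordinate at the critical value. Recall that for each $c \in \C$, the B\"ottcher coordinate $\phi_c$ is the conformal map from a neighborhood of infinity in the basin of infinity of $f_c$ to a neighborhood of infinity in $\C\setminus\overline{\D}$, normalized by $\phi_c(z)=z+O(1)$ and satisfying $\phi_c(f_c(z))=\phi_c(z)^d$. When $c\notin \cM_d$, $\phi_c$ extends holomorphically to the connected component of the escaping set containing the critical value $c$, and a direct comparison of normalizations (using that both $\Phi$ and $c \mapsto \phi_c(c)$ are tangent to the identity at infinity and conjugate the appropriate dynamics) yields the key identity $\Phi(c) = \phi_c(c)$ for all $c \notin \cM_d$.

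The first main input I would invoke is the dynamical landing theorem of Douady--Hubbard: for each parameter $c$ with $J_c$ connected, the dynamical ray $R_c(\theta) := \phi_c^{-1}(\{re^{2\pi i\theta} : r>1\})$ at a rational angle $\theta$ lands at a well-defined (pre)periodic, repelling or parabolic, point of $J_c$. The proof uses estimates for the hyperbolic metric on the complement of the filled Julia set together with the snail lemma to exclude spiraling limit behavior.

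With these tools, fix $\theta = p/q$ and pick a sequence $c_n = \Phi^{-1}(r_n e^{2\pi i\theta})$ with $r_n \downarrow 1$. By the identity $\Phi(c_n)=\phi_{c_n}(c_n)$, each $c_n$ lies on its own dynamical ray $R_{c_n}(\theta)$. Split into cases by the orbit of $\theta$ under $\theta \mapsto d\theta\pmod 1$. If $\theta$ is periodic, the ray $R_c(\theta)$ is permuted by $f_c$ and lands at a repelling periodic point $w(c)$ that moves holomorphically in $c$ throughout the appropriate hyperbolic region; the common limit $c_\theta$ of $\{c_n\}$ is then the parameter at which this periodic orbit collides with the critical orbit, producing a parabolic parameter on $\partial\cM_d$. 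If $\theta$ is strictly preperiodic, the dynamical ray lands at a preperiodic point that can likewise be followed holomorphically in $c$; the limit $c_\theta$ is the parameter at which this preperiodic point equals the critical value, forcing $c_\theta$ to be a Misiurewicz point on $\partial\cM_d$. Uniqueness of $c_\theta$ is automatic from the univalence of $\Phi^{-1}$ together with the uniqueness of the landing point of the dynamical ray at the limit parameter.

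Conversely, suppose $c_0$ is a Misiurewicz point, so the critical point $0$, and hence the critical value $c_0$, is strictly preperiodic under $f_{c_0}$; then $c_0 \in J_{c_0} \subset \partial\cM_d$. By the dynamical landing theorem applied to the preperiodic point $c_0 \in J_{c_0}$, there exists a preperiodic rational angle $\theta$ such that the dynamical ray $R_{c_0}(\theta)$ lands at $c_0$. Running the transfer argument above in reverse, i.e.\ using $\Phi(c)=\phi_c(c)$ together with holomorphic dependence of the preperiodic landing point on $c$, identifies $c_0$ as the landing point of the parameter ray $\mathcal{R}(\theta)$. The main obstacle is the dynamical landing theorem itself, which requires the hyperbolic metric estimates and snail-lemma argument of Douady--Hubbard; once it is available, the parameter-to-dynamics transfer is essentially formal, provided one tracks holomorphic motions of periodic points across hyperbolic regions of parameter space and handles the bifurcation limit at $c_\theta$ carefully. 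The generalization from $d=2$ to arbitrary $d\geq 2$ introduces no new difficulties beyond bookkeeping in the B\"ottcher normalization.
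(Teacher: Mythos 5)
The paper does not prove this proposition; it is quoted from Douady--Hubbard (with the degree-$d$ generalization in the Eberlein--Mukherjee--Schleicher reference), so the comparison is with the standard literature rather than with an in-paper argument. Your overall plan is indeed the standard one: the identity $\Phi(c)=\phi_c(c)$, the dynamical landing theorem for rational rays on connected Julia sets (hyperbolic-metric estimates plus the snail lemma), and a transfer from the dynamical plane to the parameter plane.

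However, there is a genuine gap in the transfer step. You write ``the common limit $c_\theta$ of $\{c_n\}$ is then the parameter at which\ldots,'' which presupposes that the limit exists; establishing existence is precisely the content of the proposition. The subsequent remark that ``uniqueness of $c_\theta$ is automatic from the univalence of $\Phi^{-1}$'' does not help: univalence only shows $\mathcal{R}(\theta)$ is a simple arc, and a simple arc in $\C\setminus\cM_d$ can perfectly well accumulate on a nontrivial continuum in $\partial\cM_d$. The missing ingredient in the Douady--Hubbard argument is a finiteness-plus-connectedness step: one first shows that every accumulation point $c_0$ of $\mathcal{R}(\theta)$ is a parameter with a rigid combinatorial description (for preperiodic $\theta$, a Misiurewicz parameter at which the dynamical ray $R_{c_0}(\theta)$ lands on the critical value, so $c_0$ satisfies an algebraic relation $f_{c_0}^{m+n}(0)=f_{c_0}^m(0)$ with prescribed portrait; for periodic $\theta$, a parabolic parameter with a prescribed orbit portrait and rotation number). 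The set of such parameters is \emph{finite}, while the accumulation set of a ray is compact and connected; a finite connected set is a single point. Without this counting argument, the sketch identifies a candidate landing point but never rules out oscillation or accumulation on a continuum.

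One further, smaller inaccuracy: for a periodic angle $\theta$, the landing parameter $c_\theta$ is a parabolic root, i.e.\ the parameter at which the repelling cycle followed by the ray degenerates to a parabolic cycle (multiplier a root of unity). Your phrase ``collides with the critical orbit'' describes the Misiurewicz (strictly preperiodic) case, not the periodic one; at a parabolic root the critical orbit is attracted to the parabolic cycle but does not collide with it.
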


Let $\alpha$ be a periodic point of $f_c(z)=z^d+c$ with exact period $n$. The {\em multiplier} of this cycle is $\lambda:=(f^n_c)'(\alpha)$.  The cycle is {\em attracting} if $|\lambda|<1$, {\em repelling} if $|\lambda| > 1$, and {\em parabolic} if $\lambda$ is a root of unity.   A parameter $c$ is {\em parabolic} if $f_c(z)=z^d+c$ contains a parabolic cycle; in this case, there is a unique parabolic cycle. Parabolic points also lie in $\partial \cM_d$, and are the landing points of rational rays.

\begin{prop} \cite{Douady-Hubbard}, \cite{EMS} Every parabolic point $c$ is the landing point of either one or two rational rays.  If the parabolic cycle of $f_c(z)$ has multiplier $\lambda \ne 1$, then exactly two distinct rational rays land at $c$.
\end{prop}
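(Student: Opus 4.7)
The plan is to reduce the question to a counting problem in the dynamical plane of $f_c$ and then transfer back via the classical Douady--Hubbard parameter--dynamical correspondence. Write the multiplier as $\lambda = e^{2\pi i p/q}$ with $\gcd(p,q)=1$: the case $\lambda = 1$ is $q = 1$, while $\lambda \ne 1$ is $q \geq 2$. By Fatou's flower theorem applied to $f_c^n$ (where $n$ is the exact period of the parabolic cycle), at each parabolic periodic point $\alpha$ there are $q$ attracting and $q$ repelling petals, cyclically permuted by $f_c^n$ with combinatorial rotation number $p/q$.

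First I would count dynamical rays landing at the parabolic cycle. The set of angles $\theta \in \Q/\Z$ whose dynamical ray of $f_c$ lands at a given parabolic periodic point is known to form a nonempty, finite, $f_c^n$-invariant cycle of rationals, organized so that consecutive rays in the cyclic order separate consecutive repelling petals. At the characteristic parabolic point (the one whose immediate attracting petal absorbs the critical orbit), the rays relevant for the parameter count are precisely those bounding the petal containing the critical value $c$. When $q = 1$, the unique petal fills a full neighborhood of $\alpha$ and is bounded by a single landing ray; when $q \geq 2$, this petal is bounded by exactly two landing rays.

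Next I would invoke the parameter--dynamical correspondence: a rational parameter ray $\mathcal{R}(\theta)$ lands at $c$ if and only if the corresponding dynamical ray of $f_c$ lands at the characteristic parabolic point. The existence direction follows by perturbing $c$ into a nearby attracting hyperbolic component, where the holomorphic motion of external rays together with continuity of the uniformizer $\Phi$ transfers the dynamical landing data into parameter space; letting the perturbation shrink yields the desired parameter rays. Combining this with the dynamical count gives exactly $1$ ray when $\lambda = 1$ and exactly $2$ rays when $\lambda \neq 1$, proving the proposition.

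The main obstacle is the rigidity half of the correspondence, namely ruling out any additional rational rays landing at $c$. The existence of the expected rays is a relatively standard perturbative argument, but the exclusion of extra landing rays requires the full combinatorial machinery of Douady--Hubbard orbit portraits, which encodes precisely which rational angles are permitted to land at a given periodic or preperiodic point, together with a careful analysis of how this portrait degenerates as one approaches the parabolic parameter.
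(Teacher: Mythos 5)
The paper does not prove this proposition---it simply cites \cite{Douady-Hubbard} and \cite{EMS}---so there is no proof in the paper to compare against; I will assess your sketch on its own terms.

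Your overall strategy (count dynamical rays landing at the characteristic parabolic point of $f_c$, then transfer to the parameter plane via the Douady--Hubbard correspondence) is the right one, and you correctly isolate the two genuinely hard steps (the perturbative existence argument and the rigidity/exclusion argument via orbit portraits). However, there is a real error in the dynamical count that would cause your proof, if completed as sketched, to establish a \emph{false} statement. You claim that $q=1$ (multiplier $\lambda = 1$) forces exactly one parameter ray to land. That dichotomy is wrong: the correspondence between petals and landing rays is not one-to-one. Consider the airplane root $c \approx -1.7549$ in $\cM_2$, which is the root of a primitive period-$3$ hyperbolic component; the parabolic $3$-cycle has multiplier exactly $1$, so $q=1$ and there is a single attracting petal at each cycle point, yet the two parameter rays $\mathcal{R}(3/7)$ and $\mathcal{R}(4/7)$ land at $c$, and two dynamical rays land at the characteristic parabolic point. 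The number of rays landing at a parabolic periodic point is governed by the number of accesses to that point from the basin of infinity (encoded in the orbit portrait), not by the Leau--Fatou petal count, and for $\lambda=1$ these disagree whenever the parabolic cycle has period greater than $1$. The correct dichotomy is: a single ray lands precisely when $\alpha$ is a \emph{fixed} point of $f_c$ with $f_c'(\alpha)=1$ (a cusp of the period-$1$ component of $\cM_d$, of which there are $d-1$), and two rays land in every other case, including primitive roots of components of period greater than $1$. Note that the proposition as stated only asserts ``one or two'' without claiming the converse for $\lambda=1$, so you need not---and must not---prove the stronger dichotomy your sketch asserts.

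The remaining steps are standard but are genuinely nontrivial, as you note: the existence half uses the parametrization of a hyperbolic component by the multiplier map together with the stability of landing of repelling/parabolic periodic rays under perturbation, and the exclusion half needs the orbit-portrait machinery (Milnor's treatment for $d=2$, or \cite{EMS} in the multibrot case) to show no further rational angles are admissible. Your sketch correctly identifies these as the load-bearing components, so beyond repairing the petal-versus-access conflation the outline is viable.
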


If $\mathcal{R}(\theta)$ and $\mathcal{R}(\theta')$ land at the same point, we say $\theta$ and $\theta'$ are a {\em landing pair}.  If their common landing point $c$ is parabolic with multiplier $\ne 1$, then $\cM_d \setminus \{ c \}$ consists of two connected components. In this case, the component which does not contain 0 is the {\em wake} $w_c$ of $c$, and if $\mathcal{R}(\theta)$ and $\mathcal{R}(\theta')$ land at $c$, the {\em width} of the wake $w_c$ is defined to be $|w_c| := \theta' - \theta$, assuming $0 < \theta < \theta' < 1$.  For more about external rays, one can refer to \cite{EMS}.  For illustration, see Figure \ref{external rays}.

Recall that a stable, connected component $H$ in $\cM_d$ is hyperbolic of period $\ell$ if $z^d+c$ has an attracting cycle of exact period $\ell$ for every $c\in H$.

\begin{prop} \label{standard pair} For all $k \geq 1,$ $\frac{1}{d^k-1}$ and $\frac{d}{d^k-1}$ are a landing pair, and their landing point $c_k$ lies on the boundary of both the unique period 1 hyperbolic component, and a component of period $k$.  
\end{prop}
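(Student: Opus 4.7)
My plan is to interpret $c_k$ as the parabolic parameter on the boundary of the period~1 hyperbolic component where a period-$k$ bulb attaches, and then use orbit portrait combinatorics together with the Douady--Hubbard parameter landing theorem to identify the two parameter rays landing there.

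First, a direct computation confirms that $\theta_1 := \frac{1}{d^k-1}$ and $\theta_2 := \frac{d}{d^k-1}$ are periodic of exact period $k$ under the $d$-tupling map $\sigma_d\colon \theta \mapsto d\theta \pmod 1$: indeed, $\sigma_d(\theta_1) = \theta_2$ and $\sigma_d^k(\theta_1) = \frac{d^k}{d^k-1} \equiv \theta_1 \pmod 1$, with exactness because for $0 < j < k$ one has $1 < d^j < d^k$, ruling out $d^j \equiv 1 \pmod{d^k-1}$. By the theory of periodic rays cited just above, both $\mathcal R(\theta_1)$ and $\mathcal R(\theta_2)$ then land at parabolic parameters of $\partial \cM_d$.

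Next I would single out $c_k \in \partial \cM_d$ as a parabolic parameter on the boundary of the unique period~1 hyperbolic component $H_1$ at which the tracked attracting fixed point $\alpha(c)$ of $f_c(z) = z^d + c$ acquires multiplier $e^{2\pi i/k}$. Such $c_k$ exists because the multiplier extends continuously to map $\partial H_1$ onto $S^1$, and classical parabolic bifurcation theory with rotation number $1/k$ produces a period-$k$ hyperbolic component attached to $H_1$ at $c_k$, placing $c_k$ on both claimed boundaries. What remains is to show the two rays $\mathcal R(\theta_1)$, $\mathcal R(\theta_2)$ land at this specific $c_k$ (and hence at a common point). For this I would analyze the orbit portrait at the parabolic fixed point $\alpha$ of $f_{c_k}$: exactly $k$ dynamical external rays land on $\alpha$, forming a single $\sigma_d$-orbit of combinatorial rotation number $1/k$. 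The unique such orbit is $\mathcal O := \{d^j/(d^k-1) : 0 \le j < k\}$, whose elements are increasing on $[0,1)$ and are cyclically shifted by one step under $\sigma_d$. By the Douady--Hubbard parameter ray landing theorem (in Milnor's orbit portrait formulation), the two parameter rays landing at $c_k$ are the \emph{characteristic angles} of this portrait: the consecutive pair in $\mathcal O$ whose associated dynamical-plane sector contains the critical value $c_k = f_{c_k}(0)$. Among the $k$ cyclic gaps in $\mathcal O$, of lengths $d^j(d-1)/(d^k-1)$ for $j=0,\ldots,k-1$, the smallest is the one between $\theta_1$ and $\theta_2$, of length $(d-1)/(d^k-1)$, and I claim this smallest gap is the characteristic one.

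The main obstacle is verifying this last claim: identifying the critical-value sector as precisely the smallest gap. My approach is a deformation argument---track the orbit portrait, or equivalently the critical-value sector, as $c$ moves along a path in $\overline{H_1}$ from the super-attracting center $c=0$ (where $f_0(z)=z^d$ has fully explicit B\"ottcher dynamics, and the critical-value sector can be read off directly from where $f_0(0)=0$ sits relative to the fixed point $\alpha=0$) out to $c_k$, exploiting the stability of the orbit portrait inside the period~1 wake to conclude the characteristic pair does not jump. Alternatively, one can invoke Douady--Hubbard's tuning formalism to reduce to the classical case $d=2$, where the identification of $\bigl(1/(2^k-1),\,2/(2^k-1)\bigr)$ as the characteristic pair of the principal period-$k$ bifurcation from the main cardioid of $\cM_2$ is well known.
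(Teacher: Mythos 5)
The overall shape of your plan --- periodicity under $\sigma_d$, rational ray landing, orbit-portrait combinatorics, and the smallest-gap criterion for the characteristic arc --- is consistent with the ``standard arguments'' the paper alludes to (Proposition~3.5 of [GKN]). Two points need attention. First, the uniqueness claim is false for $d>2$: there is not a unique $\sigma_d$-cycle of period $k$ with combinatorial rotation number $1/k$. With $d=3$, $k=2$, the cycles $\{1/8,3/8\}$, $\{2/8,6/8\}$, $\{5/8,7/8\}$ all have rotation number $1/2$. This reflects the $(d-1)$-fold symmetry $c\mapsto\zeta c$ of $\cM_d$: the multiplier map on the main component is $(d-1)$-to-$1$, so there are $d-1$ parabolic candidates for $c_k$ on $\partial H_1$, each carrying a different orbit $\mathcal O$. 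As written, your forward direction --- fix ``the'' $c_k$, then assert its portrait is $\mathcal O=\{d^j/(d^k-1)\}$ --- has no way to single out the right one. The argument should be run in reverse: start from $\theta_1=\frac{1}{d^k-1}$ and $\theta_2=\frac{d}{d^k-1}$, show their cycle $\mathcal O$ is the portrait of a fixed (period-one) parabolic orbit with ray period $k$, identify $\theta_1,\theta_2$ as the characteristic pair of that portrait, and conclude the common landing parameter lies on $\partial H_1$ and on the boundary of a period-$k$ component.

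Second, the ``main obstacle'' you flag is not an obstacle. That the characteristic arc is the shortest complementary arc (equivalently, that the critical value sector has minimal angular width) is a known lemma --- Milnor's critical value sector lemma for $d=2$, with the Multibrot analogue developed in exactly the combinatorial framework of the reference [EMS] that the paper cites --- and should simply be invoked. Moreover, the deformation you propose would not work as stated: for $c$ inside $H_1$ the fixed point $\alpha(c)$ is attracting, so \emph{no} dynamic rays land at it, and there is no orbit portrait at $\alpha$ to track continuously from $c=0$ out to $c_k$; the portrait only appears at the parabolic bifurcation. (A correct perturbative version would place $c$ slightly inside the period-$k$ subwake and use stability of the repelling portrait there, but that is precisely the content of the landing theorem you should be citing rather than re-deriving.) With these two repairs the plan does yield the proposition.
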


The proof of the proposition is by standard arguments; see Proposition 3.5 of \cite{GKN:preprint} for details.

\begin{figure} 
\includegraphics[width=2.25in]{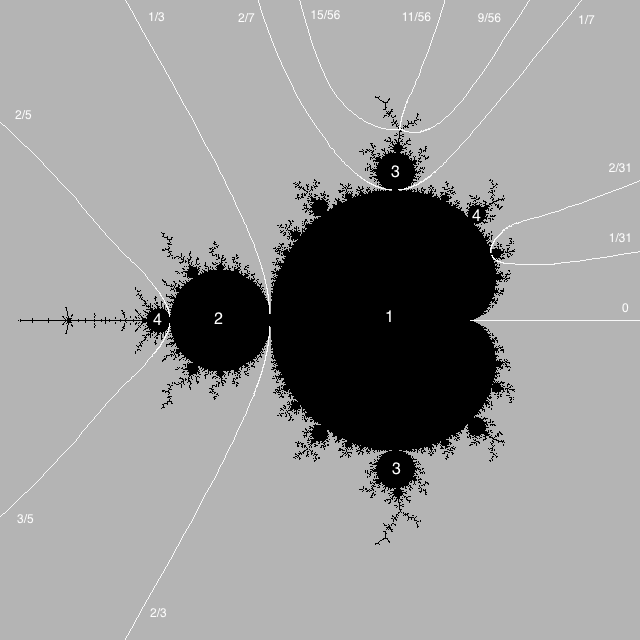}
\includegraphics[width=2.25in]{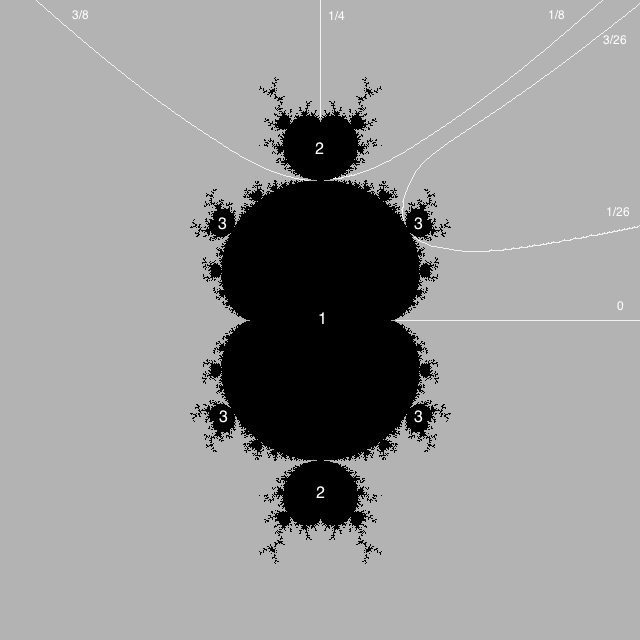}

\caption{ \small Selected external rays of the degree 2 (left) and degree 3 (right) Mandelbrot sets.  Angles of each ray are indicated next to the ray.  A select number of hyperbolic components are labeled with the period of the component.  External rays drawn by Wolf Jung's program Mandel.}
\label{external rays} 
\end{figure}

A hyperbolic component $H$ in $\cM_d$ is equipped with a $(d-1)$-to-1 map $\lambda_H: H \rightarrow \mathbb{D}$, given by the multiplier of the attracting cycle; this map extends continuously to the boundary; the point $0$ has a unique preimage under the multiplier map, known as the {\em center} of the hyperbolic component.  Given $\frac{p}{q} \in \mathbb{Q} / \mathbb{Z}$, the preimage under $\lambda_H$ of the ray $\{ re^{2 \pi i p/q} : 0 < r < 1 \} \subset \mathbb{D} \setminus \{ 0 \}$ is a collection of $d-1$ disjoint curves in the component (known as {\em internal rays}), which land at parabolic points on $\partial H$.  In this case, we say that the wake is a {\em $\frac{p}{q}$-subwake} of $H$.  Conversely, each parabolic point is the landing of some internal ray.   See Figure \ref{internal rays} for an illustration.  

\begin{figure} [t]
\includegraphics[width=2.05in]{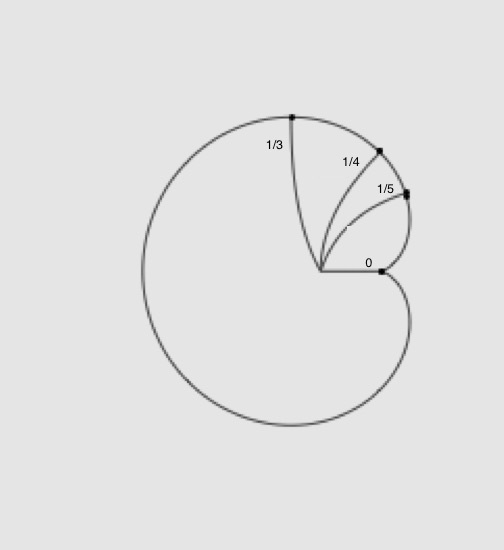}

\caption{ \small Select internal rays of the period $1$ component of the Mandelbrot set, with angles indicated.  Internal rays drawn by Wolfram's Mathematica.}
\label{internal rays}
\end{figure}

For example, if $H$ is the period $1$ component, some preimage of the point $e^{2 \pi i / k} \in \partial \mathbb{D}$ will land at the point $c_k \in \partial H$ of Proposition \ref{standard pair}.  

If $H$ is a hyperbolic component of $\cM_d$ of period $> 1$, there is a unique point $c_H$ on the boundary of $H$ so that both $\lambda(c_H) = 1$ and $\cM_d \setminus \{ c_H \}$ consists of two connected components; this is the {\em root} of $H$.  There will be exactly two rays landing at $c_H$, and we correspondingly define the {\em width} of the component $H$ to be the width of the wake at $c_H$.

Our key tool towards the main theorem is the so-called {\em wake formula}, which was folklore, eventually due to Bruin-Kaffl-Schleicher in \cite{BKL} for $d=2$  and Kauko for general $d$ (see \cite{Kauko}):

\begin{prop} \label{width formula} Let $H$ be a hyperbolic component of $\cM_d$ with period $k$ and width $|H|$.  Let $w_{p/q}$ be any $\frac{p}{q}$-subwake of $H$.  Then 
$$|w_{p/q}| = \frac{|H|}{d-1} \frac{(d^k-1)^2}{d^{qk}-1}.$$
\end{prop}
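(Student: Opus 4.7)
The plan is to exploit the combinatorial structure of external angles under the $d$-tupling map $\sigma_d\colon\theta\mapsto d\theta\pmod 1$ on $\RR/\ZZ$, which governs the dynamics of $z^d+c$ on its Julia set via the B\"ottcher coordinate, and to reduce the general wake formula to a base case via Douady--Hubbard tuning.

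First, I would identify the denominators of $|H|$ and $|w_{p/q}|$. The two external rays landing at the root $c_H$ have angles $\theta_- < \theta_+$ that are both exactly $k$-periodic under $\sigma_d$, so they lie in $\frac{1}{d^k-1}\ZZ/\ZZ$, giving $|H|=(a_+-a_-)/(d^k-1)$ for integers $a_\pm$. At the root of $w_{p/q}$, the parabolic cycle of $z^d+c$ has multiplier $e^{2\pi i p/q}$ and consists of $k$ fixed points of $f_c^k$ cyclically permuted with combinatorial rotation number $p/q$; its exact period is therefore $qk$, so the two landing rays are exactly $qk$-periodic under $\sigma_d$, forcing $(d^{qk}-1)|w_{p/q}|\in\ZZ$.

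Next, I would apply Douady--Hubbard tuning, which provides a homeomorphism $\tau_H\colon\cM_d\to\cM_d^H$ onto the small copy of $\cM_d$ having $H$ as its main cardioid, and which sends the $p/q$-subwake of the main cardioid of $\cM_d$ onto $w_{p/q}$. On external angles, $\tau_H$ is implemented by a substitution rule: each base-$d$ digit in the expansion of an angle is replaced by one of two length-$k$ blocks drawn from the base-$d$ expansions of $\theta_-$ and $\theta_+$. Tracking the effect of this substitution on the two rays bounding the $p/q$-subwake yields an explicit formula relating $|w_{p/q}|$ to $|H|$ and to the width $|w_{p/q}^{(0)}|$ of the corresponding subwake of the main cardioid of $\cM_d$, with the precise scaling arising from the change of denominator from $d^q-1$ to $d^{qk}-1$.

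Finally, I would compute the base case directly. For the main cardioid (period $1$, $|H_0|=1$), the $p/q$-subwake is a period-$q$ component attached at a parabolic point; a direct argument along the lines of Proposition~\ref{standard pair}, together with the observation that the parabolic cycle must have multiplier $e^{2\pi i p/q}$, identifies the two landing rays and yields $|w_{p/q}^{(0)}|=(d-1)/(d^q-1)$. Substituting into the tuning relation then produces the stated formula. The main obstacle will be the combinatorial bookkeeping for the tuning substitution --- specifically, verifying that it correctly matches the two landing rays at the parabolic point and produces the precise scaling factor $(d^k-1)^2/(d^{qk}-1)$; this requires the machinery of kneading sequences or internal addresses for $\cM_d$, which forms the technical heart of the Bruin--Kaffl--Schleicher and Kauko arguments.
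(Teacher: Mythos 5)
The paper does not prove this proposition; it explicitly attributes the wake formula to Bruin--Kaffl--Schleicher (for $d=2$) and Kauko (for general $d$) and simply cites \cite{BKL, Kauko}. So there is no in-paper proof to compare your sketch against, and your attempt is a genuinely independent one.

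As a road map, your strategy --- establish the formula for $p/q$-subwakes of the main hyperbolic component, then propagate to arbitrary $H$ via Douady--Hubbard tuning --- is the natural reduction and is in the spirit of the cited references. But as written it leaves the entire content of the proposition unproved, and not only in the places you flag. Three concrete issues. (i) Your description of the tuning substitution, ``each base-$d$ digit is replaced by one of two length-$k$ blocks,'' is specific to $d=2$; for $d>2$ there are $d$ digits, and moreover the multiplier map is $(d-1)$-to-$1$, so $H$ has $d-1$ distinct $p/q$-subwakes and the small-copy/straightening structure of $\cM_d$ is more delicate than the quadratic case your substitution presupposes. (ii) Even for $d=2$, ``tracking the effect on the two bounding rays'' does not automatically produce a scaling that depends only on $q$: the image width under a block substitution depends on the positions at which the two angle-strings disagree, not merely on their numerical difference $\theta_+ - \theta_-$. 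To get a clean factor of $(d^k-1)^2/(d^{qk}-1)$ you must prove a combinatorial fact about rotation sets of $\sigma_d$ --- essentially that the two bounding strings differ by a single carry in a fixed position --- and this is nontrivial. (iii) The base case $|w^{(0)}_{p/q}|=(d-1)/(d^q-1)$ is itself a nontrivial instance of the formula and needs its own argument via the rotation-set combinatorics. You acknowledge at the end that the bookkeeping ``forms the technical heart'' of the BKL/Kauko arguments, which is exactly right; until that bookkeeping is carried out, this is an outline of a proof rather than a proof, which is also why the paper itself elects to cite rather than reprove the result.
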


\begin{cor} \label{1/2 rays} Let $H$ be a hyperbolic component of the degree $d$ Mandelbrot set.  Then the $1/2$-subwakes of $H$ are precisely the set of subwakes of $H$ with maximal possible width.
\end{cor}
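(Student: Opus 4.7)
The plan is to read off the corollary directly from the wake formula of Proposition \ref{width formula}. Fix a hyperbolic component $H$ of period $k$, and for any reduced fraction $p/q$ indexing a subwake write
\[
|w_{p/q}| \;=\; \frac{|H|}{d-1}\cdot\frac{(d^k-1)^2}{d^{qk}-1}.
\]
With $H$, $d$ and $k$ fixed, the right-hand side depends only on the denominator $q$. Since $d\geq 2$ and $k\geq 1$, the map $q\mapsto d^{qk}-1$ is strictly increasing in $q$, so $|w_{p/q}|$ is a strictly decreasing function of $q$ alone. Thus maximizing the width of a subwake of $H$ is equivalent to minimizing the denominator $q$ of the angle labelling that subwake.

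Next I would argue that $q=1$ cannot occur for a genuine subwake: when $p/q \equiv 0 \pmod{\Z}$, the preimages under $\lambda_H$ of the ray $\{re^{2\pi i\cdot 0} : 0<r<1\}$ are internal rays landing at a boundary point of $H$ with multiplier $1$. For a period-$k$ component with $k>1$, this is precisely the root of $H$, and for the period-$1$ component it is the cusp of the cardioid; in neither case does it index a subwake in the sense of Section~5, whose landing point must be a parabolic boundary point of $H$ different from its root. Consequently every subwake of $H$ satisfies $q\geq 2$, and by the strict monotonicity above the maximum width is achieved exactly when $q=2$. Since $1/2$ is the unique reduced fraction in $\Q/\Z$ with denominator $2$, the $\tfrac{1}{2}$-subwakes of $H$ are precisely the subwakes of $H$ of maximal width, as claimed.

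I do not foresee any serious obstacle here: given Proposition \ref{width formula}, the corollary is an immediate one-variable monotonicity observation. The only delicate point is the exclusion of $q=1$, which is forced by the definition of a subwake (parabolic landing point distinct from the root), and has been sketched above.
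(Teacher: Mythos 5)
Your proof is correct and is essentially the argument the paper intends (the paper gives no explicit proof, treating the corollary as immediate from Proposition~\ref{width formula}). You make the two key observations: (i) by the wake formula the width $|w_{p/q}|$ depends only on the denominator $q$ and is strictly decreasing in $q$, since $d^{qk}-1$ is strictly increasing in $q$; and (ii) $q=1$ is excluded, since the paper only defines a wake at a parabolic parameter with multiplier $\ne 1$ (and indeed for $\lambda=1$ the landing parabolic point is a cusp or the root of $H$, where either only one ray lands or the "wake" is the full wake of $H$ rather than a proper subwake). With $q\geq 2$ established, the maximum is attained exactly at $q=2$, i.e.\ at the $1/2$-subwakes. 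This is the right reading of the corollary and fills in the small step the paper leaves implicit.
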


We provide now a key proposition towards Theorem \ref{main theorem}.

\begin{prop} \label{etarational} Under the hypothesis of Theorem \ref{escape relation}, there exists a component $X_0$ of the preimage of $\mathbb{C} \setminus \cM_d$ under $\psi_2$ such that the real number $\eta$ of Equation \ref{phirelation} is rational.  
\end{prop}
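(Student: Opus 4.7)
\emph{Plan.} The plan is to exhibit one common PCF parameter $t^*\in X(\C)$ at which both $\psi_1(t^*)$ and $\psi_2(t^*)$ are Misiurewicz points on $\partial\cM_d$, and then to read off the rationality of $\eta$ by pulling back, via \eqref{phirelation}, a rational external ray landing at $\psi_2(t^*)$. By Theorem~\ref{escape relation} the bifurcation loci $\psi_1^{-1}(\partial\cM_d)$ and $\psi_2^{-1}(\partial\cM_d)$ coincide (they are the supports of the proportional measures $d_{\psi_2}\mu_{\psi_1}=d_{\psi_1}\mu_{\psi_2}$), and the same equality holds for the interiors $\psi_i^{-1}(\cM_d^\circ)$ from the equality of the escape-rate functions in Theorem~\ref{escape relation}. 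Since a PCF parameter on $\partial\cM_d$ is necessarily Misiurewicz while hyperbolic centers lie in $\cM_d^\circ$, every common PCF parameter has both images simultaneously Misiurewicz or simultaneously hyperbolic centers; the mixed possibilities are excluded. Combining the infinite supply of common PCF parameters with the equidistribution statement of Theorem~\ref{pcf equidistribution}, one extracts such a $t^*$ of the first (Misiurewicz) type.

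Granted $t^*$, it lies on the boundary of some component $X_0$ of $\psi_2^{-1}(\C\setminus\cM_d)$, and by the Douady--Hubbard landing theorem one may choose a rational angle $\theta_2\in\Q/\Z$ such that $\mathcal{R}(\theta_2)$ lands at $\psi_2(t^*)$. The preimage $\psi_2^{-1}(\mathcal{R}(\theta_2))$ near $t^*$ consists of finitely many real-analytic arcs accumulating at $t^*$; fix one such arc $\gamma$ lying inside $X_0$ (which fixes the relevant component). Along $\gamma$, $\arg\Phi(\psi_2(t))\equiv\theta_2\pmod{\Z}$, so the identity $\Phi(\psi_1(t))^{d_{\psi_2}}=e^{2\pi i\eta}\Phi(\psi_2(t))^{d_{\psi_1}}$ forces $\arg\Phi(\psi_1(t))$ to be a constant value $\phi:=(\eta+d_{\psi_1}\theta_2+k)/d_{\psi_2}\pmod{\Z}$ for some $k\in\{0,\ldots,d_{\psi_2}-1\}$, determined by continuity and connectedness. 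Consequently $\psi_1(\gamma)\subset\mathcal{R}(\phi)$. As $t\to t^*$ along $\gamma$, $\psi_1(t)\to\psi_1(t^*)$ while $|\Phi(\psi_1(t))|\to 1$ by Remark~\ref{phiequality}, which is exactly the statement that $\mathcal{R}(\phi)$ lands at the Misiurewicz point $\psi_1(t^*)$. Since only rational rays can land at a Misiurewicz point, $\phi\in\Q$, whence $\eta=d_{\psi_2}\phi-d_{\psi_1}\theta_2-k\in\Q$.

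The main obstacle is the last claim of the first paragraph: ensuring at least one common PCF parameter gives both images Misiurewicz rather than both hyperbolic centers. Equidistribution alone does not instantly rule out the ``double hyperbolic center'' scenario, since centers of hyperbolic components can cluster on $\partial\cM_d$. Resolving this requires a finer density argument, most naturally by passing to boundaries of the relevant hyperbolic components of $X$ and approximating by parabolic parameters -- whose combinatorics is governed by Proposition~\ref{standard pair} and the wake formula of Proposition~\ref{width formula} -- either producing the desired Misiurewicz common PCF parameter outright or running the ray-landing argument directly on a landing pair at a parabolic boundary point. The remaining steps (pulling back $\mathcal{R}(\theta_2)$, transporting its argument through \eqref{phirelation}, and using the rationality of rays landing at Misiurewicz points) are routine given the external-ray formalism recorded in the preliminaries.
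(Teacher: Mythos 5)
Your treatment of the case where some common PCF parameter $t^*$ has Misiurewicz images is essentially the paper's Case 1, worked out in more detail: you pull back a rational ray $\mathcal{R}(\theta_2)$ through $\psi_2$ to an arc $\gamma\subset X_0$, push forward by $\psi_1$, and observe via \eqref{phirelation} that $\psi_1(\gamma)$ sits on a single external ray $\mathcal{R}(\phi)$ which lands at the Misiurewicz point $\psi_1(t^*)$, so $\phi$ is rational and hence $\eta=d_{\psi_2}\phi-d_{\psi_1}\theta_2-k$ is rational. That is correct, modulo the standard fact (not quoted in the paper but true, and also implicitly used there) that only rational rays land at Misiurewicz points.

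The genuine gap is exactly the one you flag yourself: the ``double hyperbolic center'' scenario, in which every common PCF parameter $t$ has both $\psi_1(t)$ and $\psi_2(t)$ in the interior of $\cM_d$. You correctly note that equidistribution cannot exclude this, and you sketch two possible escape routes (produce a Misiurewicz common parameter outright, or run the ray-landing argument at a parabolic boundary point), but you do not carry either one out, and the wake-formula tools you propose (Propositions \ref{standard pair} and \ref{width formula}) are not the right lever here. The paper resolves this case by a different mechanism: pick a common hyperbolic-center parameter $t_0$ far enough from branch points that $h=\psi_1\circ\psi_2^{-1}$ is a well-defined analytic map on a neighborhood $U\supset \overline{H_2}$; openness of $h$ plus the presence of the PCF parameter $\psi_1(t_0)$ forces $h(H_2)$ to be hyperbolic; then take a parabolic $c\in\partial H_2$ that is the root of a second hyperbolic component $H_2'$ with $\overline{H_2'}\subset\psi_2(U)$, and invoke Schleicher's Theorem 4.1 of \cite{Schleicher1} to conclude that $h(c)$, lying on the boundary of two stable components, must itself be parabolic. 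Only then can one run the rational-ray argument at the pair $(c,h(c))$. That appeal to Schleicher's result (a statement about fibers and local connectivity of the Multibrot sets) is the missing ingredient in your proposal; without something of that strength, knowing $c$ is parabolic does not by itself tell you that $h(c)$ is parabolic, and the argument does not close.

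A second, smaller point: you should be a bit more careful about the case analysis. You need a common PCF parameter $t$ that is additionally not a branch point of $\psi_1$ or $\psi_2$; the paper builds this into the case hypotheses, and in the hyperbolic-center case it must further arrange that an entire component $\overline{H_2'}$ lies inside the branch of $\psi_2^{-1}$. These finiteness-of-branch-points refinements are easy, but they are needed to make sense of $h$ where you want to apply it.
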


\begin{proof} We have two cases.  Suppose first that there exists $t \in X$ satisfying the following:

\begin{enumerate}
\item $t_0$ is not a branch point for $\psi_1$ or $\psi_2$, 
\item both $\psi_1(t)$ and $\psi_2(t)$ are PCF parameters, and
\item $\psi_1(t)$ or $\psi_2(t)$ is a Misiurewicz point. 
\end{enumerate}

Suppose without loss of generality that $\psi_1(t)$ is Misiurewicz; since $\psi_1(t) \in \partial \cM_d$, every open neighborhood of $\psi_1(t)$ contains parameters $c$ such that $|\Phi(c)| > 0;$ by Equation \ref{phirelation}, the same holds for $\psi_2(t)$.  Choose $X_0$ so that $t \in X_0$, and write $\mathcal{R}(\theta_1)$ and $\mathcal{R}(\theta_2)$ for the rational external rays landing at $\psi_1(t)$ and $\psi_2(t)$, respectively.  By Equation \ref{phirelation}, we have
$$d_2 \theta_1 - d_1 \theta_2 = \eta,$$
and so $\eta$ is rational as desired.

Suppose now that the conditions above are not satisfied for any $t \in X$.  Call $B$ the set of branch points of the projection maps $\psi_i$.  Since $B$ is a finite set, then by hypothesis, there exists some $t_0 \in X \setminus B$ such that both $\psi_1(t_0)$ and $\psi_2(t_0)$ are centers of hyperbolic components.  In fact, we may choose $t_0$ so that the component $H_2$ of $\cM_d$ which has center $\psi_2(t_0)$ is far from the branch points in the following sense: there exists an open neighborhood $U$ of $t_0$ such that $\psi_2(U)$ is simply connected, $H_2 \subset \psi_2(U)$, and there exists a parabolic parameter $c \in \partial H_2$ such that $c$ is the root of a component $H'_2$ satisfying $\overline{H'_2} \subset \psi_2(U)$.  Therefore we have a well-defined analytic function 
$$h(z) := \psi_1 \circ \psi_2^{-1} : U \rightarrow h(U).$$

Since $h$ is an open map, $h(H_2)$ is a component of $\mathbb{C} \setminus \partial \cM_d$ which contains the PCF parameter $\psi_1(t_0)$, so is hyperbolic.  Since $U$ contains no branch points, $h(c)$ lies on the boundary of two stable components, so by Theorem 4.1 of \cite{Schleicher1}, $h(c)$ is a parabolic parameter.  Choose $t \in X$ such that $\psi_1(t) = h(c)$ and $\psi_2(t) = c$, and choose $X_0$ so that $t \in X_0$.  By Equation \ref{phirelation}, any rational rays $\mathcal{R}(\theta)$ landing at $c$ and $\mathcal{R}(\theta')$ landing at $h(c)$ satisfy the relation
$$d_2 \theta' - d_1 \theta = \eta,$$
and we conclude that $\eta$ is rational as desired.

\end{proof}

We are now ready to prove the remaining significant result towards Theorem \ref{main theorem}.

\begin{theorem} \label{trivial relation} Assume the hypothesis of Theorem \ref{escape relation}.  Then there exists an open subset $U$ of the complex plane containing infinitely many PCF parameters on which an analytic branch of $\psi_1 \circ \psi_2^{-1}$ is given by $z \mapsto \zeta z$, for   some $(d-1)$st root of unity $\zeta$.
\end{theorem}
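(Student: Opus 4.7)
The plan is to combine the rationality of $\eta$ from Proposition~\ref{etarational} with the combinatorial rigidity of external rays on $\cM_d$, passing through the crucial intermediate step $d_1 = d_2$. Setting $\zeta_0 := e^{2\pi i\eta}$, on $\psi_2(X_0)$ the analytic branch $h := \psi_1 \circ \psi_2^{-1}$ satisfies $\Phi(h(z))^{d_2} = \zeta_0 \Phi(z)^{d_1}$. First I would extend $h$ by analytic continuation along paths in $\C \setminus B$, where $B \subset \C$ is the finite branch locus of $\psi_2$, obtaining a single-valued analytic map on a simply connected domain $U \subset \C \setminus B$ that I choose to contain a substantial portion of $\cM_d$ and an adjacent piece of its complement. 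By Remark~\ref{phiequality}, $h^{-1}(\cM_d) \cap U = \cM_d \cap U$, so $h$ is a local biholomorphism permuting the stratification of $\cM_d$, sending hyperbolic components to hyperbolic components, roots to roots, and subwakes to subwakes.

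The central step is $d_1 = d_2$. I would pick $H_2 \subset U$ to be a period-$2$ hyperbolic component of $\cM_d$ (there are $d-1$ such, all of width $1/(d+1)$ by Proposition~\ref{width formula} applied to the $1/2$-subwakes of the main cardioid, so generically one lies in $U$). Then $h(\overline{H_2}) = \overline{H_1}$ for some hyperbolic component $H_1$ of period $k_1$, and the functional equation transforms the two external rays at the root of $H_2$ by $\theta \mapsto (d_1\theta + \eta + j)/d_2 \pmod 1$, so $|H_1| = (d_1/d_2)|H_2|$, with the same width-rescaling applied to every subwake of $H_2$ lying in $U$. Since the principal subcomponent of a $p/q$-subwake of $H_i$ is a hyperbolic component of period $q\cdot\mathrm{per}(H_i)$ and $h$ sends principal subcomponents to principal subcomponents, we obtain $qk_2 = q'k_1$ whenever a $p/q$-subwake of $H_2$ maps to a $p'/q'$-subwake of $H_1$. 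Combining this with Proposition~\ref{width formula} and the width-rescaling yields $(d^{k_2}-1)^2/(d^{qk_2}-1) = (d^{k_1}-1)^2/(d^{q'k_1}-1)$; substituting $d^{qk_2} = d^{q'k_1}$ cancels the denominators and forces $k_1 = k_2$. Since all period-$2$ components of $\cM_d$ share the common width $1/(d+1)$, we conclude $|H_1| = |H_2|$ and hence $d_1 = d_2 =: D$.

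Once $d_1 = d_2 = D$, the functional equation reduces to $\Phi(h(z)) = \zeta \Phi(z)$ on $U \setminus \cM_d$ for a locally constant $D$-th root $\zeta$ of $\zeta_0$. At the root of any period-$k$ hyperbolic component in $U$, both landing ray angles have denominator dividing $d^k - 1$, and since $h$ carries this root to another period-$k$ root, the angle shift $\beta := \arg\zeta/(2\pi)$ must satisfy $\beta \in \tfrac{1}{d^k - 1}\Z \pmod 1$. Arranging $U$ to contain hyperbolic components of infinitely many periods and using $\gcd_k(d^k - 1) = d - 1$, we conclude $\beta \in \tfrac{1}{d-1}\Z$, i.e., $\zeta^{d-1} = 1$. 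The $(d-1)$-fold rotational symmetry $\zeta\cM_d = \cM_d$ together with the normalization $\Phi(w) = w + O(1)$ at infinity yields $\Phi(\zeta z) = \zeta \Phi(z)$, so $h(z) = \Phi^{-1}(\zeta\Phi(z)) = \zeta z$ on $U \setminus \cM_d$ and, by analytic continuation, on all of $U$. Since PCF parameters accumulate densely on $\partial H_2 \subset U$ (via centers of subcomponents attached at each parabolic boundary point), $U$ contains infinitely many PCF parameters.

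The main obstacle is the combinatorial analysis in the second paragraph: it rests on the preservation of principal subcomponents of subwakes under the local biholomorphism $h$ (which gives $qk_2 = q'k_1$ and hence $k_1 = k_2$ via the wake formula), together with the fact that all $d - 1$ period-$2$ hyperbolic components of $\cM_d$ share the common width $1/(d+1)$, which is what ultimately pins down $d_1/d_2 = 1$.
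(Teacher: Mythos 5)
Your overall strategy — show $d_1 = d_2$ first, then reduce the functional equation $\Phi(h(z))^{d_2} = \zeta_0\Phi(z)^{d_1}$ to $\Phi(h(z)) = \zeta\Phi(z)$ and pin down $\zeta^{d-1}=1$ — is a legitimate and arguably cleaner route than the paper's (which first proves period-preservation in general, then deduces that the angle map has slope $k/(d-1)$, then separately bounds $b\mid(d-1)$ and forces $k=d-1$). However, the central step of your second paragraph contains a circular argument.

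You write: ``Since the principal subcomponent of a $p/q$-subwake of $H_i$ is a hyperbolic component of period $q\cdot\mathrm{per}(H_i)$ and $h$ sends principal subcomponents to principal subcomponents, we obtain $qk_2 = q'k_1$.'' The fact that $h$ sends the principal subcomponent $P$ of a $p/q$-subwake of $H_2$ to the principal subcomponent $P'$ of a $p'/q'$-subwake of $H_1$ only tells you that $h(P)=P'$, with $P$ of period $qk_2$ and $P'$ of period $q'k_1$. To conclude $qk_2 = q'k_1$ you would need to already know that $h$ preserves periods of hyperbolic components — but that is exactly what you are trying to prove, and without $qk_2=q'k_1$ the denominators in the wake-formula equation $\frac{(d^{k_2}-1)^2}{d^{qk_2}-1} = \frac{(d^{k_1}-1)^2}{d^{q'k_1}-1}$ do not cancel and you cannot extract $k_1=k_2$. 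For a general $p/q$-subwake you also have no control on $q'$, since an affine map of slope $d_1/d_2\ne1$ on angles does not carry $p/q$-internal angles to $p'/q$-internal angles.

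The paper avoids this exactly by restricting to $1/2$-subwakes: Corollary~\ref{1/2 rays} characterizes them intrinsically as the subwakes of maximal width, and since $h$ rescales all subwake widths by the same factor $d_1/d_2$, it necessarily sends the $1/2$-subwake of $H$ to a $1/2$-subwake of $h(H)$. This forces $q=q'=2$ without any period hypothesis, and then the wake formula gives $\frac{d^{k_2}-1}{d^{k_2}+1} = \frac{d^{k_1}-1}{d^{k_1}+1}$, hence $k_1=k_2$. With that fix, your argument goes through: for your period-$2$ component $H_2$ (width $\frac{1}{d+1}$) you get $k_1=2$, and since the $d-1$ period-$2$ components (the centers $c$ with $c^{d-1}=-1$) all have width $\frac{1}{d+1}$, the relation $|H_1|=(d_1/d_2)|H_2|$ gives $d_1=d_2$, after which your third paragraph is sound. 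So the gap is repairable, but as written the key cancellation $qk_2=q'k_1$ is unjustified.
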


\begin{proof}
Fix an integer $m > 2$. We will define a neighborhood $U(m)$ as follows: define $c_m$ to be the landing point of the external ray $\mathcal{R}(\frac{d}{d^m-1})$; by Proposition \ref{standard pair}, this point lies on the main component.  By the preceding discussion, there also exists an internal ray $r_m$ landing at $c_m$, as well the internal ray $r_0$ of angle zero which lands at $c_0 = (d-1)/d^{d/(d-1)}$.  The union

$$C := r_0 \cup r_m \cup \mathcal{R}(0) \cup \mathcal{R}\left( \frac{d}{d^m-1} \right) \cup \{ 0 \}$$
is a curve such that  $\mathbb{C} \setminus C$ has two (simply connected) components.  Define $U(m)$ to be the component of $\mathbb{C} \setminus C$ which contains the hyperbolic component with root $c_m$; in other words, the component containing parameters of arbitrarily small argument (see Figure \ref{external rays}, \ref{internal rays}).  By Proposition \ref{standard pair}, $U(m)$ contains infinitely many PCF parameters.

Note now that $m$ may be chosen sufficiently large so that $U(m)$ omits the images of the branch points of $\psi_1$ and $\psi_2$. Therefore we may define an analytic branch $h(z) = \psi_1 \circ \psi_2^{-1}$ on $U(m)$.  By Equation \ref{phirelation}, $h$ sends external rays to external rays, and there exists $0 < \ell \leq d_2$ (given by choice of branch) such that $h$ acts on external angles $0 < \theta < \frac{d}{d^m-1}$ by 
$$\theta \mapsto \frac{d_1}{d_2} \theta + \eta + \frac{\ell}{d_2}.$$

Denote such a choice of $U(m)$ simply by $U$, and write $\eta + \frac{\ell}{d_2} =  \frac{a}{b}$ in lowest terms (this is possible by Proposition \ref{etarational}).  Note then that if $\mathcal{R}(\theta_1)$ and $\mathcal{R}(\theta_2)$ land together in $U$, their images under the continuous map $h$ must land together, and the wake of the image rays has width $\frac{d_1}{d_2} |\theta_2 - \theta_1|.$   

\begin{prop} 
\label{prop same period}
Let $U$ and $h$ be defined as above.  Then every hyperbolic component of $U$ is sent by $h$ to a hyperbolic component of the same period.
\end{prop}

\begin{proof}[Proof of Proposition~\ref{prop same period}.]
Suppose $H$ is a hyperbolic component of period $N$ contained in the neighborhood $U$ with root $c$ and wake $W$.   Since $h$ is an open map on $U$ which preserves rational external rays, $h(H)$ is also a hyperbolic component, say of period $N'$, root $h(c)$, and wake $h(W)$.  Choose any landing point of an internal ray of $H$ with angle $1/2$; call this point $c_{1/2}$, and its subwake $H_{1/2}$, noting that $c_{1/2}$ is the root of a hyperbolic component of period $2N$.  Since $h$ has a linear action on external angles, Corollary \ref{1/2 rays} guarantees that $h(H_{1/2})$ is one of the $1/2$-subwakes of $h(H)$; call it $h(H)_{1/2}$.  The width formula of Proposition \ref{width formula} computes:
$$\frac{|h(H)|}{d-1} \frac{(d^{N'}-1)^2}{d^{2N'}-1} = |h(H)_{1/2}| = |h(H_{1/2})| = \frac{d_1}{d_2} |H_{1/2}| = \frac{d_1}{d_2} \frac{|H|}{d-1} \frac{(d^N-1)^2}{d^{2N}-1}.$$
Since $|h(H)| = \frac{d_1 \ell}{d_2} |H|,$ the right- and left-hand sides of the equation above imply that $N = N'$, as desired.
\end{proof}

For any hyperbolic component of period $N$, the rays landing at the root of the component have denominators which divide $d^N-1$ (this is again standard; see Chapter 8 of \cite{Douady-Hubbard}), and therefore the width of a period $N$ component has denominator which is a divisor of $d^N-1$.  Since $h$ fixes the period of any hyperbolic component in $U$, and $U$ contains components of period $N$ and width $\frac{d-1}{d^N-1}$ for all $N > m$ (see Proposition~\ref{standard pair}), we deduce that $\frac{d_1}{d_2} \cdot \frac{d-1}{d^N-1}$ has denominator which is a divisor of $d^N-1$ for all $N > m$.  We conclude that $d_2$ divides $d_1 (d-1)$, and so the map that $h$ induces on $\mathbb{R} / \mathbb{Z} \cap (0, \frac{d}{d^m-1})$ is simply
$$\theta \mapsto \frac{k}{d-1} \theta + \frac{a}{b}$$
for some $1 \leq k \leq d-1$.

By the same arguments and choosing $N$ sufficiently large, the ray 
$$h\left(\mathcal{R}\left(\frac{1}{d^N-1}\right)\right) =  \mathcal{R}\left(\frac{d-1}{d^N-1}\cdot \frac{k}{d-1} + \frac{a}{b}\right)$$
lands at a hyperbolic component of period $N$, and so $\frac{k}{d^N-1} + \frac{a}{b}$ has denominator dividing $d^N-1$.  Since $a/b$ is in lowest terms, we conclude that $b$ divides $d^N-1$ for all $N$ sufficiently large.  Choosing $M$ and $N$ large and coprime, the greatest common divisor of $d^M-1$ and $d^N-1$ is $d-1$, so we have $b \mid (d-1)$.  

We now have integers $k$, $j$ so that $h$ acts on external angles of $U$ by 
$$\theta \mapsto \frac{k}{d-1} \theta + \frac{j}{d-1}.$$
However, we know that the ray of angle $\frac{1}{d^N-1}$ maps to a ray with denominator dividing $d^N-1$ for all $N$ sufficiently large; in other words, 
$$k + j(d^N-1) \ \equiv  0  \ \mod (d-1)$$
for all $N$ sufficiently large.  Thus $k = d-1$, and so $h$ acts on external angles by translation by $\frac{j}{d-1}$; that is, $h$ acts on external rays as multiplication by some $(d-1)$st root of unity $\zeta$.  

By definition of $\Phi$,
$$\Phi(\zeta z) = \zeta \Phi(z)$$
for all $z \in \mathbb{C} \setminus \cM_d$, so $h$ coincides with the map $z \mapsto \zeta z$ on $(\mathbb{C} \setminus \cM_d) \cap U$, and thus on the entire domain $U$.
\end{proof}

The proof of the main theorem follows easily from Theorem~\ref{trivial relation}.

\begin{proof}[Proof of Theorem \ref{main theorem}.] First we note that indeed, if $C$ has the form (1),~(2)~or~(3) as in the conclusion of Theorem~\ref{main theorem}, then it contains infinitely many points $(a,b)$ with both coordinates PCF parameters, i.e., both $z^d+a$ and $z^d+b$ are PCF polynomials. For curves of the form (1)~or~(2), this fact is obvious, while for curves of the form (3), we note that once $f_c(z):=z^d+c$ is PCF, then also $f_{\zeta c}(z):=z^d+\zeta c$ is PCF (where $\zeta^{d-1}=1$)  because $\zeta^{-1}f_{\zeta c}(\zeta z) = f_c(z)$. Also, there exist infinitely many $c\in\Qbar$ such that $f_c$ is PCF.

So, from now on, assume $C$ be an irreducible plane curve containing infinitely many $(a, b)$ such that $z^d+a$ and $z^d+b$ are both PCF. Since the PCF parameters are algebraic numbers, we conclude that $C$ is defined over $\Qbar$. If $C$ does not project dominantly onto one of the two coordinates of $\A^2$ then, without loss of generality, we may assume $C=\{t_0\}\times \A^1$ for some $t_0\in\Qbar$. But then by the hypothesis satisfied by $C$, we conclude that $t_0$ is a PCF parameter, i.e., $C$ has the form (1) as in the conclusion of Theorem~\ref{main theorem}.

So, from now on, we assume $C$ projects dominantly onto both coordinates of $\A^2$.  Let $\pi : X \rightarrow C$ be a nonsingular projective model of $C$; therefore $X$ is defined over some number field $K$.  Write $\pi_1$ and $\pi_2$ for the projection maps of $C$ onto the axis of $\A^2$, and let $\psi_i = \pi_i \circ \pi$ for $i=1,2$.  Then we can apply Theorem~\ref{escape relation} and deduce Theorem~\ref{trivial relation}. Thus there exists a $(d-1)$st root of unity $\zeta$ such that for infinitely many $c \in \cM_d$, $\zeta c = \psi_1(t_c)$ and $c = \psi_2(t_c)$ for some $t_c \in X(\C)$; accordingly, there exist infinitely many $c\in\C$ such that $(\zeta c, c) \in C(\C)$.  Since $C$ is irreducible, we conclude the proof of Theorem~\ref{main theorem}.
\end{proof}

\def\cprime{$'$}

\end{document}